\newenvironment{acknowledgements}
{
  \begin{abstract}
}{
  \end{abstract}
}
\newtheorem{thrm}{Theorem}[section]
\newtheorem{lemma}[thrm]{Lemma}
\newtheorem{prop}[thrm]{Proposition}
\newtheorem{cor}[thrm]{Corollary}
\newtheorem{dfn}[thrm]{Definition}
\newtheorem{rmrk}[thrm]{Remark}
\newtheorem{conv}[thrm]{Convention}
\newtheorem{exam}[thrm]{Example}
\newcommand{\newsection}{  
\setcounter{equation}{0}\section}
\def\appendix#1{\addtocounter{section}{1}\setcounter{equation}{0}
\renewcommand{\thesection}{\Alph{section}}
\section*{Appendix \thesection\protect\indent \parbox[t]{11.15cm}{#1}}
\addcontentsline{toc}{section}{Appendix \thesection\ \ \ #1}}
\newcommand{\be}{\begin{eqnarray}}
\newcommand{\ee}{\end{eqnarray}}
\newcommand{\bea}{\begin{eqnarray}}
\newcommand{\eea}{\end{eqnarray}}
\newcommand{\ba}{\begin{array}}
\newcommand{\ea}{\end{array}}
\def\d{\delta}
\def\sb {{\nabla}}
\def\LC{{\nabla^g}}
\def\p{{\varphi}}
\def\ph{{\Phi}}
\def\ps{{\Psi^+}}
\def\sp{{\Psi^-}}
\begin{document}

\title[Curvature identities on $SPIN(7)$--manifold]{The Riemannian curvature  identities for the torsion connection
on  $SPIN(7)$--manifold and generalized Ricci solitons}

%\date{\today }

\author{Stefan Ivanov}
\address{University of Sofia, Faculty of Mathematics and
Informatics, blvd. James Bourchier 5, 1164, Sofia, Bulgaria}
\address{and Institute of Mathematics and Informatics, Bulgarian Academy of
Sciences} 
\address{e-mail: ivanovsp@fmi.uni-sofia.bg}

\author{Alexander Petkov}
\address{University of Sofia, Faculty of Mathematics and
Informatics, blvd. James Bourchier 5, 1164, Sofia, Bulgaria}
\address{e-mail: a\_petkov\_fmi@abv.bg}

\maketitle

\begin{abstract}
It is shown that on compact $Spin(7)$--manifold with exterior derivative of the Lee form lying in the Lie algebra $spin(7)$ the curvature $R$ of the $Spin(7)$--torsion connection $R\in S^2\Lambda^2$ with vanishing Ricci tensor if and only if the $3$-form torsion is  parallel with respect to the Levi-Civita connection. 
It is also proved that $R$ satisfies the Riemannian first Bianchi identity exactly when  the $3$-form torsion is  parallel with respect to the Levi-Civita and to the $Spin(7)$--torsion connections simultaneously.

Precise conditions for a  compact $Spin(7)$--manifold to has closed torsion are given in terms of the Ricci tensor of the $Spin(7)$--torsion connection. 
It is shown that a  compact $Spin(7)$--manifold with closed torsion is Ricci flat if and only if either the norm of the torsion or the Riemannian scalar curvature  is constant. 
It is proved that any compact $Spin(7)$--manifold with closed torsion 3-form is a generalized gradient Ricci soliton and this is equivalent to a certain vector field to be parallel with respect to the torsion connection. In particular, this vector field preserves 
the $Spin(7)$--structure.
\medskip

AMS MSC2010: 53C55, 53C21, 53D18, 53Z05

Key words and phrases: $Spin(7)$--structures, Generalized Ricci solitons, Torsion connection, Curvature identities
\end{abstract}

\begin{acknowledgements}
We would like to thank Jeffrey Streets, Ilka Agricola and the anonymous referee for  extremely useful remarks, comments and suggestions.

The research of S.I.  is partially supported    by Contract KP-06-H72-1/05.12.2023 with the National Science Fund of Bulgaria, Contract 80-10-192 / 17.5.2023   with the Sofia University "St.Kl.Ohridski" and  the National Science Fund of Bulgaria, National Scientific Program ``VIHREN", Project KP-06-DV-7. The research of A.P.  is partially  financed by the European Union-Next Generation EU, through the National Recovery and Resilience Plan of the Republic of Bulgaria, project %� 
SUMMIT BG-RRP-2.004-0008-C01. 
\end{acknowledgements}

\tableofcontents

\setcounter{section}{0}
\setcounter{subsection}{0}

\newsection{Introduction}
Riemannian manifolds with metric connections having totally skew-symmetric torsion and special holonomy received a lot of interest in mathematics and theoretical physics mainly from supersymmetric string theories and supergravity.  The main reason becomes from the Hull-Strominger system which describes the supersymmetric background in heterotic string theories \cite{Str,Hull}. The number of preserved supersymmetries depends on the number  of  parallel spinors with respect to a metric connection $\sb$ with totally skew-symmetric torsion $T$. The presence of a $\nabla$-parallel spinor leads to restriction of the
holonomy group $Hol(\nabla)$ of the torsion connection $\nabla$  to a group which is a stabilizer of a non-trivial spinor. These groups are known to be   $SU(n)$, $Sp(n)$, $G_2$ and  $Spin(7)$ due to the work of Hitchin \cite{Hit} and Wang \cite{Wang}. %Consequently,  $Hol(\nabla)$ must be contained in $SU(n)$
%\cite{Str,GMW},  %,sethi,IP1,IP2,Car,BB,BBE,GIP}, the exceptional group $G_2$ \cite{FI,GKMW,FI1}, the Lie group $Spin(7)$ \cite{GKMW,I1}. 
A detailed analysis of the possible geometries is
carried out in \cite{GMW}. 

The Hull-Strominger system has been investigated intensively for $SU(n)$-holonomy, i.e. on complex non-K\"ahler manifolds by many mathematicians and physicists. The first compact solutions has been constructed  in \cite{LY,yau,yau1} and a geometric flow point of view has been developed  in \cite{PPZ,PPZ1,Ph,PPZ4}.
% has been investigated intensively, see e.g.  \cite{LY,yau,yau1,FIUV,XS,OLS,PPZ,PPZ1,PPZ2,PPZ3,PPZ4,FHP,FHP1,CPYau,Ph} and references therein.}

In dimension 8, the existence of parallel spinors with respect to a metric connection with torsion 3-form  is  important in supersymmetric string theories since the number of parallel spinors determines the number of preserved supersymmetries, which is the first Killing spinor equation in the heterotic Hull-Strominger system in dimension eight \cite{GKMW,GMW,GMPW},  and  %he presence of a parallel spinor with respect to a metric connection with torsion 3-form 
this leads to the reduction of the holonomy group of the torsion connection to a subgroup of $Spin(7)$.  It is shown in \cite{I} that any $Spin(7)$--manifold admits a unique metric connection with totally skew-symmetric torsion  preserving the $Spin(7)$--structure, i.e. the first Killing spinor equation always has a solution (see also \cite{Fr,Mer} for another proof of this fact). 

 For application to the Hull-Strominger system, the $Spin(7)$--manifold should be compact and  the dilatino equation in the Hull-Strominger system leads  the Spin(7)--manifold has to be  globally conformally balanced which means that the Lee form $\theta$  defined below in \eqref{g2li} must be an exact form, $\theta=df$ for a smooth function $f$  which represents the dilaton  \cite{GKMW,GMW,GMPW,MS}.   A geometric flow point of view on the heterotic $Spin(7)$-Hull-Strominger system in dimension eight is developed recently in \cite{AMP}.  
 
 Special attention is also paid when the torsion 3-form is closed, $dT=0$. For example, in type II string theory, $T$ is identified with the 3-form field strength. This is required by construction to satisfy $dT=0$ (see e.g. \cite{GKMW,GMW}).  
More generally, the geometry of a torsion connection with closed torsion form appears  in the frame work of the generalized Ricci flow and the  generalized (gradient) Ricci solitons developed by Garcia-Fernandez and Streets \cite{GFS} (see  the references therein).
 
The main purpose of this paper is to investigate the curvature properties of the torsion connection on 8-dimensional compact $Spin(7)$--manifolds.

First, we observe the following 
\begin{thrm}\label{closT}
Let $(M,\ph)$ be a  compact  $Spin(7)$--manifold.  The torsion 3-form is  closed, $dT=0$ if and only if the Ricci tensor $Ric$ of the  Spin(7)-torsion connection  is given by the following formula
\begin{equation}\label{clos1}Ric=-\frac76\sb\theta.
\end{equation}
 In particular,  the  exterior derivative of the Lee form belongs to $\Lambda^2_{21}\cong spin(7)$ and $\delta T=\frac76d^{\sb}\theta$.
\end{thrm}
This helps to prove the next
\begin{thrm}\label{closTt}
Let $(M,\ph)$ be a  compact  $Spin(7)$--manifold with closed torsion. Then  the following conditions are equivalent:
\begin{itemize}
\item[a)] The Spin(7)-torsion connection  is Ricci flat, $Ric=0$;
\item[b)] The norm of the torsion is constant, $d||T||^2=0$;
\item[c)] The Lee form is co-closed, $\delta\theta=0$;
\item[d)] The Riemannian scalar curvature is constant, $Scal^g=const.$;
\item[e)] The Spin(7)-torsion connection has vanishing scalar curvature, $Scal=0$;
\item[f)] The Lee form is $\nabla$-parallel, $\sb\theta=0$;
\end{itemize}
In each of the six  cases above the torsion is  a harmonic 3-form.
\end{thrm}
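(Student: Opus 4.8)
The plan is to leverage Theorem~\ref{closT} heavily, since it already supplies the key identity $Ric=-\frac{7}{6}\sb\theta$ under the hypothesis $dT=0$. The strategy is to show the six conditions are equivalent by establishing a cycle of implications, with the compactness of $M$ entering at exactly the points where we need to pass from a pointwise inequality together with a vanishing integral to a pointwise identity. I would first dispatch the ``easy'' equivalences that follow formally from the curvature identity, and then handle the harder implications that genuinely require integration by parts on the closed manifold.

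First I would record the formal consequences of $Ric=-\frac{7}{6}\sb\theta$. Taking the trace immediately relates the scalar curvature $Scal$ of $\sb$ to the codifferential of the Lee form: up to a constant, $Scal=-\frac{7}{6}\,\mathrm{tr}(\sb\theta)=\frac{7}{6}\,\delta\theta$ (with the sign depending on the trace/codifferential conventions fixed earlier in the paper). This gives $(d)\Leftrightarrow(f)$ directly, and shows $(c)\Rightarrow(a)\Rightarrow(d)$ trivially, since $\sb\theta=0$ forces $Ric=0$ and hence $Scal=0$. The implication $(a)\Rightarrow(c)$ is also immediate from the identity, because $Ric=0$ means $\sb\theta=0$. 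So conditions $(a)$, $(c)$ are equivalent, and each implies $(d)\Leftrightarrow(f)$. The remaining work is to close the loop through the norm of the torsion and the Riemannian scalar curvature.

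For the implications involving $(b)$ and $(e)$ I expect to need the standard relation between the torsion scalar curvature $Scal$, the Riemannian scalar curvature $Scal^g$, and the norm $\|T\|^2$ of the torsion 3-form, namely a formula of the schematic type $Scal=Scal^g-\frac{3}{2}\|T\|^2$ (the exact coefficient is the one forced by the paper's normalization of $T$ and $\sb$). Combined with the closedness $dT=0$, which via Theorem~\ref{closT} controls the divergence of $T$ and the behavior of $\theta$, this should let me express $d\|T\|^2$ and $dScal^g$ in terms of $\sb\theta$ and its trace. The cleanest route is to integrate the identity over $M$: the integral of $\delta\theta$ vanishes on a closed manifold, so $\int_M Scal=0$, and then a Bochner/Weitzenb\"ock-type argument applied to $\theta$ (using $d\theta\in spin(7)$ from Theorem~\ref{closT} to control the curvature terms) shows that $\int_M|\sb\theta|^2$ is a nonnegative multiple of $\int_M Scal$ plus a total divergence. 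This is what converts each of the scalar hypotheses $(b)$, $(d)$, $(e)$, $(f)$ into the vanishing of $\int_M|\sb\theta|^2$, hence into $(c)$.

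The main obstacle I anticipate is producing the correct Bochner/Weitzenb\"ock identity for the Lee form with respect to the torsion connection and verifying that the curvature term it generates is nonnegative (or of a definite sign) precisely because $dT=0$ places $d\theta$ in $spin(7)$. This is where the $Spin(7)$-representation theory does the real work: one must check that the relevant contraction of the $\sb$-curvature against $\theta\otimes\theta$ has the right sign after using the first Bianchi identity with torsion and the constraint $d\theta\in\Lambda^2_{21}$. Once the sign is pinned down, the equivalences $(b)\Leftrightarrow(c)$ and $(e)\Leftrightarrow(c)$ follow by integrating a pointwise divergence identity and using that $M$ is closed. Finally, for the concluding assertion that $T$ is harmonic, I would note that $dT=0$ is given, so it remains to show $\delta T=0$; this should follow from $\sb\theta=0$ (equivalently $Ric=0$) together with the general identity expressing $\delta T$ in terms of $\theta$ and the intrinsic torsion, which for a $Spin(7)$-structure ties $\delta T$ to the Lee form, so that $\sb\theta=0$ forces $\delta T=0$ and hence $\Delta T=0$.
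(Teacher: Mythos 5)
Your formal reductions are fine: with Theorem~\ref{closT} in hand, $Ric=-\frac76\sb\theta$ gives a)$\Leftrightarrow$c) and, after tracing, d)$\Leftrightarrow$f) (since $Scal=\frac76\delta\theta$; note in passing that your schematic relation $Scal=Scal^g-\frac32\|T\|^2$ has the wrong coefficient, the paper's \eqref{rics} gives $Scal^g=Scal+\frac14\|T\|^2$), and your harmonicity argument at the end is correct, because $\delta T=\frac76 d^{\sb}\theta$ vanishes once $\sb\theta=0$. The genuine gap is in the central step connecting b), e), f) to c). Your plan is to integrate: $\int_M\delta\theta\,vol=0$ gives $\int_M Scal\,vol=0$, and you then posit a Bochner identity making $\int_M\|\sb\theta\|^2$ a nonnegative multiple of $\int_M Scal$ plus a total divergence. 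But $\int_M Scal\,vol=0$ holds \emph{unconditionally} on every compact $Spin(7)$--manifold with $dT=0$ --- none of the hypotheses b), d), e), f) is used --- so if such an identity existed it would force $\sb\theta\equiv0$ on every compact $Spin(7)$--manifold with closed torsion, trivializing the theorem and contradicting the general picture (cf.\ Theorem~\ref{inf}: in general only the vector field $V=\frac76\theta-df$ is $\sb$-parallel, not $\theta$ itself). So the sign-definite curvature term you hope to extract from $d\theta\in spin(7)$ cannot exist in the form you describe, and your sketch never specifies where the particular hypothesis b) or e) would actually enter the estimate.

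The paper's route shows why mere integration cannot close the argument even after the hypothesis is imposed. From the contracted second Bianchi identity for $\sb$ and the closed-torsion formulas $Ric=-\frac76\sb\theta$, $\delta T=\frac76d^{\sb}\theta$, together with $\sb_i\delta T_{ij}=\frac12\delta T_{ia}T_{iaj}$ \eqref{iii}, it derives the pointwise elliptic identity \eqref{fmax}, $\Delta\bigl(\delta\theta-\frac17\|T\|^2\bigr)+\theta_j\sb_j\bigl(\frac76\delta\theta+\frac16\|T\|^2\bigr)=\frac{12}7\|Ric\|^2\ge0$. The drift term $\theta_j\sb_j(\cdot)$ is exactly what blocks integration: integrating \eqref{fmax} under b) yields only $\frac76\int_M(\delta\theta)^2=\frac73\int_M\|\sb\theta\|^2$, which is compatible with nonvanishing $\sb\theta$ by Cauchy--Schwarz ($(\delta\theta)^2\le8\|\sb\theta\|^2$). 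The paper instead applies the \emph{strong maximum principle} to \eqref{fmax} (and to \eqref{fmaxf} for e)) to obtain $\delta\theta=const=0$, respectively $d\|T\|^2=0$; it then needs the genuinely $Spin(7)$-representation-theoretic contraction \eqref{g22} to deduce $d\|\theta\|^2=0$; and only then does a \emph{pointwise} (not integral) Bochner computation with the Ricci identity for $\sb$ produce $\|\sb\theta\|^2=0$. These three ingredients --- the second Bianchi identity with torsion, the maximum principle with drift, and the contraction identity \eqref{g22} --- are all absent from your proposal, and without them the loop through b) and e) does not close.
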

We show in Theorem~\ref{inf} that any compact $Spin(7)$--manifold with closed torsion 3-form is a generalized gradient Ricci soliton and this condition is equivalent to a certain vector field to be parallel with respect to the $Spin(7)$--torsion connection. We also find out that this  vector field  is an infinitesimal automorphism of the $Spin(7)$--structure.

Studying the curvature properties of the $Spin(7)$--torsion connection, we find necessary conditions the torsion 3-form to be closed and harmonic in the compact case,
\begin{thrm}\label{mainsu3}
Let $(M,\ph)$ be a compact $Spin(7)$--manifold  and the exterior derivative of the Lee form lies in the Lie algebra $spin(7)$.

The Spin(7)-torsion connection $\sb$   has curvature $R \in S^2\Lambda^2$ with vanishing Ricci tensor,  
\begin{equation}\label{s2l2}
R(X,Y,Z,V)=R(Z,V,X,Y),\qquad Ric(X,Y)=0,
\end{equation}
if and only if the torsion 3-form $T$ is parallel with respest to the Levi-Civita connection, $\LC T=0$.

In particular, the torsion 3-form  is harmonic, $dT=\delta T=0$ and the Lee form  is $\sb$-parallel, $\sb\theta=0$.
\end{thrm}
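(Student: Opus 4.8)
The computational backbone is the first--order identity relating the curvature $R$ of $\sb$ to the Riemannian curvature $R^g$ of $\LC$. Writing $\sb=\LC+\tfrac12 T$ and differentiating once gives the Friedrich--Ivanov type formula
\[
R(X,Y,Z,V)=R^g(X,Y,Z,V)+\tfrac12\big[(\LC_XT)(Y,Z,V)-(\LC_YT)(X,Z,V)\big]+\tfrac14\,Q_T(X,Y,Z,V),
\]
where $Q_T$ is the universal term quadratic in $T$. Since $R^g$ is pair symmetric and $Q_T$, being built symmetrically from two copies of $T$, also lies in $S^2\Lambda^2$, the part of the right--hand side that is skew under the pair swap $(X,Y)\leftrightarrow(Z,V)$ is linear in $\LC T$; a short computation (the quadratic terms cancel in pairs) then gives the clean pair--symmetry defect
\[
R(X,Y,Z,V)-R(Z,V,X,Y)=\tfrac12\big[(\LC_XT)(Y,Z,V)-(\LC_YT)(X,Z,V)-(\LC_ZT)(V,X,Y)+(\LC_VT)(Z,X,Y)\big].
\]
Thus $R\in S^2\Lambda^2$ is exactly the vanishing of the projection of $\LC T$ onto $\Lambda^2\wedge\Lambda^2\subset\Lambda^1\otimes\Lambda^3$. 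Tracing the same formula records the two pieces of $Ric$ I will need: its antisymmetric part is a multiple of $\delta T$, while its symmetric part equals $Ric^g-\tfrac14\langle\iota_\bullet T,\iota_\bullet T\rangle$; combined with Theorem~\ref{closT} this ties $Ric$ to $\LC T$ and to $\sb\theta$.

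The implication $\LC T=0\Rightarrow\eqref{s2l2}$ is then immediate. The defect above vanishes, so $R\in S^2\Lambda^2$; moreover $dT$ is the total alternation of $\LC T$ and $\delta T=-\sum_i\iota_{e_i}\LC_{e_i}T$, so both vanish, $T$ is harmonic, and $d\|T\|^2=2\langle\LC T,T\rangle=0$. As $dT=0$ on the compact $M$, Theorem~\ref{closTt} (via its condition (b)) yields $Ric=0$ and $\sb\theta=0$. This establishes \eqref{s2l2} together with the concluding assertions in this direction.

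The substance is the converse. Assuming $R\in S^2\Lambda^2$, $Ric=0$ and $d\theta\in spin(7)$ on the compact $M$, I would derive $\LC T=0$ by decomposing $\LC T\in\Lambda^1\otimes\Lambda^3$ into its $Spin(7)$--irreducible summands and matching each to a hypothesis. The antisymmetric part of $Ric=0$ forces $\delta T=0$, killing the $\Lambda^2$ (contraction) component; the defect formula and $R\in S^2\Lambda^2$ kill the $\Lambda^2\wedge\Lambda^2$ component; and $d\theta\in spin(7)$, being the vanishing of the $\Lambda^2_7$--part of $d\theta$ (which is first order in $\LC T$ up to quadratic $T$--terms), removes the remaining Lee--form direction. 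What survives is the mixed--symmetry component of $\LC T$.

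The expected main obstacle is annihilating that last component, and here compactness is indispensable. To do so I would set up a Bochner / integration--by--parts identity on $M$ for $\int_M\|\LC T\|^2$, in which the curvature hypotheses $R\in S^2\Lambda^2$, $Ric=0$ and $\delta T=0$ make every term on the right either vanish or reduce to a quantity controlled by Theorem~\ref{closTt} (this is where $\sb\theta=0$ and $d\|T\|^2=0$ re--enter). Concluding $\LC T=0$, one immediately gets $dT=\delta T=0$, so $T$ is harmonic, while $d\|T\|^2=0$ together with Theorem~\ref{closTt} gives $\sb\theta=0$ and Theorem~\ref{closT} identifies the vanishing $Ric$; these are precisely the ``in particular'' statements.
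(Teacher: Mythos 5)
Your forward implication is correct and matches the paper: $\LC T=0$ gives $dT=\delta T=0$ and $d\|T\|^2=0$, so on the compact $M$ Theorem~\ref{closTt} yields $Ric=0$ and $\sb\theta=0$, while $dT=4\LC T$ holds trivially and gives $R\in S^2\Lambda^2$ by \eqref{4form}; your pair-symmetry defect formula is also a correct identity. The converse, however, is the substance of the theorem, and there your argument has a genuine gap, in two respects. First, a structural misidentification: by \eqref{4form} and \eqref{dtnt}, the hypothesis $R\in S^2\Lambda^2$ already forces $\sb T$, and hence $\LC T$, to be totally skew, with $\LC T=\tfrac14\,dT$. So after your first reductions nothing of ``mixed symmetry'' survives: the entire remaining content is to prove that the single $4$-form $dT$ vanishes. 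Second, your proposed mechanism for that --- a Bochner identity for $\int_M\|\LC T\|^2$ in which ``every term on the right either vanishes or reduces to a controlled quantity'' --- is never written down, and since $\|\LC T\|^2$ is pointwise a fixed multiple of $\|dT\|^2$ here, the proposal is circular: it presupposes exactly the conclusion ($dT=0$) it is supposed to produce. Note also that the curvature hypotheses concern $R$ of $\sb$, not $R^g$, so converting a Weitzenb\"ock curvature term re-introduces $\LC T$ and quadratic torsion terms with no sign control.

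The paper closes this gap with a two-step mechanism absent from your sketch. Step one (Lemma~\ref{sbtheta}): prove $\sb\theta=0$. Here $Ric=0$ and $\sb T\in\Lambda^4$ give the algebraic identity \eqref{li} expressing $\sb\theta$ through quadratic torsion terms; the hypothesis $d\theta\in spin(7)$ enters not as the vanishing of a component of $\LC T$ (as in your sketch) but via Corollary~\ref{maincor}, supplying \eqref{symthet} and \eqref{tth}, which yield $d\|\theta\|^2=0$ in \eqref{part1}; constancy of $\|T\|$ comes from the external input Theorem~\ref{s2ric} (your $d\|T\|^2=2\langle\LC T,T\rangle$ is unavailable in this direction); then $Scal=0$ makes $\delta\theta$ constant, and a Bochner computation for $\|\theta\|^2$ using the Ricci identity gives $\sb\theta=0$. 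Step two: with $\sb\theta=0$, the contractions \eqref{nthh} show $\sb T,\sigma^T,dT\in\Lambda^4_{27}$, which by Proposition~\ref{427} is self-dual; hence $\delta dT=-*d*dT=-*d^2T=0$, and on the compact $M$ one gets $0=\langle\delta dT,T\rangle_{L^2}=\|dT\|^2_{L^2}$, so $dT=0$ and $\LC T=\tfrac14 dT=0$. The self-duality of $\Lambda^4_{27}$, which makes $\delta dT$ vanish identically, is the decisive $Spin(7)$-representation-theoretic input for which your Bochner sketch offers no substitute.
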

As a consequence of Theorem~\ref{mainsu3} we obtain
\begin{cor}\label{mainspin}
Let $(M,\ph)$ be a compact  $Spin(7)$--manifold with $d\theta=0$.

The curvature of the $Spin(7)$-torsion connection $\sb$ satisfies \eqref{s2l2} 
if and only if the torsion 3-form $T$ is parallel with respest to the Levi-Civita connection, $\LC T=0.$

In this case the Lee form $\theta$ is $\sb$ and $\LC$-parallel, $\sb\theta=\LC\theta=0$.
\end{cor}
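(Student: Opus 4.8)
The plan is to read this statement off as a direct consequence of Theorem~\ref{mainsu3}, so the first step is to check that the hypotheses match. The corollary assumes the stronger condition $d\theta=0$. Since $spin(7)\cong\Lambda^2_{21}$ is a linear subspace of $\Lambda^2$ and the zero form lies in every linear subspace, the assumption $d\theta=0$ trivially implies $d\theta\in spin(7)$. Hence Theorem~\ref{mainsu3} applies verbatim: the curvature condition \eqref{s2l2} holds if and only if $\LC T=0$, and in either of these equivalent situations we already obtain $\sb\theta=0$ together with $dT=\delta T=0$. This settles the stated equivalence and the $\sb$-parallelism of $\theta$; the only genuinely new content of the corollary is the additional claim $\LC\theta=0$.

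To prove $\LC\theta=0$, I would exploit the relation between the two connections, $g(\sb_XY,Z)=g(\LC_XY,Z)+\tfrac12 T(X,Y,Z)$. Evaluating the induced connections on the $1$-form $\theta$ gives $(\sb_X\theta)(Y)=(\LC_X\theta)(Y)-\tfrac12 T(X,Y,\theta^\sharp)$, where $\theta^\sharp$ is the metric dual of $\theta$. Feeding in $\sb\theta=0$ from the previous step yields the pointwise identity
\begin{equation*}
(\LC_X\theta)(Y)=\tfrac12 T(X,Y,\theta^\sharp).
\end{equation*}

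The key step is now to antisymmetrize this in $X$ and $Y$. Because $\LC$ is torsion-free, $d\theta(X,Y)=(\LC_X\theta)(Y)-(\LC_Y\theta)(X)$, and using the total skew-symmetry of $T$ this collapses to $d\theta(X,Y)=T(X,Y,\theta^\sharp)=(\iota_{\theta^\sharp}T)(X,Y)$. The hypothesis $d\theta=0$ therefore forces $\iota_{\theta^\sharp}T=0$, i.e. $T(X,Y,\theta^\sharp)=0$ for all $X,Y$. Substituting this back into the displayed identity gives $\LC_X\theta=0$ for every $X$, that is $\LC\theta=0$. Finally, a $\LC$-parallel $1$-form is automatically closed and co-closed: $d\theta=0$ holds by hypothesis and $\delta\theta=-\mathrm{tr}_g\,\LC\theta=0$, so $\theta$ is harmonic, as claimed.

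I expect no serious obstacle here, since the substance of the result is carried by Theorem~\ref{mainsu3}; the remaining work is the short computation above. The only point requiring care is the bookkeeping of the factor $\tfrac12$ and the sign conventions for $\iota_{\theta^\sharp}$ and $\delta$, so that the identity $d\theta=\iota_{\theta^\sharp}T$ comes out exactly; but this affects neither the vanishing of $\iota_{\theta^\sharp}T$ once $d\theta=0$, nor the consequent vanishing of $\LC\theta$.
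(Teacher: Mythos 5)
Your proposal is correct and takes essentially the same route as the paper: the equivalence is read off from Theorem~\ref{mainsu3} after the (trivial) observation that $d\theta=0$ implies $d\theta\in spin(7)\cong\Lambda^2_{21}$, and the additional claim $\LC\theta=0$ follows from the comparison identities \eqref{nth}, $\LC\theta=\sb\theta+\frac12\theta\lrcorner T$ and $d\theta=d^{\sb}\theta+\theta\lrcorner T$, which together with $\sb\theta=0$ and $d\theta=0$ force $\theta\lrcorner T=0$. The paper packages this last step as Corollary~\ref{sbthetad}, obtaining $\theta\lrcorner T=0$ from $\delta T=0$ and $d\theta=0$ via \eqref{deltaT} rather than from $\sb\theta=0$ directly, but both are the same one-line consequence of \eqref{tsym}, so the arguments coincide in substance.
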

Concerning the Riemannian first Bianchi identity, we have
\begin{thrm}\label{co1}
Let $(M,\ph)$ be a compact $Spin(7)$--manifold 
 and $d\theta\in spin(7)$.

The curvature of $\sb$  satisfies the Riemannian first Bianchi identity \eqref{RB} if and only if  
\hspace{0.1cm}$\LC T=0=\sb T.$
 % In this case the Lee form $\theta$ is $\sb$-parallel.  
 \end{thrm} 
\begin{cor}\label{co2}
Let $(M,\ph)$ be a compact  $Spin(7)$--manifold and $d\theta=0$.

The curvature of $\sb$  
satisfies the Riemannian first Bianchi identity \eqref{RB} if and only if 
\hspace{0.1cm}$\LC T=0=\sb T.$

In this case the Lee form $\theta$ is $\sb$-parallel and $\LC$-parallel.
\end{cor} 
Note that spaces  with parallel torsion 3-form with respect to the torsion connection are investigated in  \cite{AFer,CMS}   and a large number of examples are given there.

We remark that, in general, metric connections with %skew symmetric
 closed torsion 3-form $T$ are closely connected with the generalized Ricci flow. Namely, the fixed points of the generalized Ricci flow are Ricci flat metric connections with harmonic torsion 3-form, $Ric=dT=\delta T=0$, we refer to the recent book \cite{GFS} and the references given there for  mathematical and physical motivation. In this direction, our results show that a compact $Spin(7)$--manifold with $d\theta\in spin(7)$ (in particular locally conformally balanced $Spin(7)$--manifold)  with Ricci flat $Spin(7)$--torsion connection having curvature $R\in S^2\Lambda^2$ is a fixed point of the corresponding generalized Ricci flow. In particular, if the curvature of the $Spin(7)$--torsion  connection satisfies the Riemannian first Bianchi identity then it is a fixed point of the  generalized Ricci flow  provided $d\theta\in spin(7)$.  Moreover, any compact $Spin(7)$--manifold with closed torsion and either constant Riemannian scalar curvature or constant norm of the torsion  is a fixed point of the  generalized Ricci flow.

\begin{rmrk} We recall \cite[Theorem~4.1]{AFF} which states that an irreducible complete and simply connected Riemannian manifold of dimension bigger or equal to $5$ with $\sb$-parallel and closed torsion 3-form, $\sb T=dT=0$ (which is equivalent to $\sb T=\sigma^T=0$ due to \eqref{dh} and \eqref{sigma} below), is a simple compact Lie group or its dual non-compact symmetric space with biinvariant metric, and, in particular the torsion connection is the flat Cartan connection. In this spirit, our results above imply that  the irreducible complete and simply connected case in Theorem~\ref{co1} and Corollary~\ref{co2} may  occur only on the simple compact Lie group $SU(3)$ since the $Spin(7)$--manifold should be a simple  compact Lie group of dimension eight. 
\end{rmrk}

\begin{conv}\label{conv}
Everywhere in the paper we will make no difference between tensors and the corresponding forms via the metric as well as we will  use Einstein summation conventions, i.e. repeated Latin  indices are summed over up to $8$.

We use the tensor norm of a k-form $\alpha, ||\alpha||^2=\sum_{i_1,\dots,i_k}\alpha_{i_1,\dots,i_k}\alpha_{i_1,\dots,i_k}=\alpha_{i_1,\dots,i_k}\alpha_{i_1,\dots,i_k}$ with respect to an orthonormal basis $\{e_1,\dots,e_n\}$.  Note the difference by a factor $k!$ with $|\alpha|^2$, which is the norm of $\alpha$ as a k-form, $||\alpha||^2=k!|\alpha|^2$.
\end{conv}

\section{Preliminaries}
In this section we recall some known curvature properties of a metric connection with totally skew-symmetric torsion on Riemannian manifold as well as 
the notions and existence of a metric linear connection preserving a given $Spin(7)$--structure and having totally skew-symmetric torsion from \cite{I,FI,IS}. 
\subsection{Metric connection with skew-symmetric  torsion and its curvature}
On a Riemannian manifold $(M,g)$ of dimension $n$ any metric connection $\sb$ with totally skew-symmetric torsion $T$ is connected with the Levi-Civita connection $\sb^g$ of the metric $g$ by
\begin{equation}\label{tsym}
\sb^g=\sb- \frac12T.
\end{equation}
The exterior derivative $dT$ has the following  expression (see e.g. \cite{I,IP2,FI})
\begin{equation}\label{dh}
\begin{split}
dT(X,Y,Z,V)=(\nabla_XT)(Y,Z,V)+(\nabla_YT)(Z,X,V)+(\nabla_ZT)(X,Y,V)\\+2\sigma^T(X,Y,Z,V)-(\nabla_VT)(X,Y,Z),
 \end{split}
 \end{equation}
where the 4-form $\sigma^T$
 is defined by
 \begin{equation}\label{sigma}
 \sigma ^T(X,Y,Z,V)=\frac12\sum_{j=1}^n(e_j\lrcorner T)\wedge(e_j\lrcorner T)(X,Y,Z,V), 
\end{equation} 
$(e_a\lrcorner T)(X,Y)=T(e_a,X,Y)$ is the interior multiplication and $\{e_1,\dots,e_n\}$ is an orthonormal  basis.

The properties of the 4-form $\sigma^T$ are studied in detail in \cite{AFF}, where it is shown that $\sigma^T$ measures the "degeneracy" of the 3-form $T$. 

One  easily gets from \eqref{tsym} \cite{AF}
\begin{equation}\label{dtnt}
\LC T=\sb T+\frac12\sigma^T.
\end{equation}
For the curvature  we use the convention $ R(X,Y)Z=[\nabla_X,\nabla_Y]Z - \nabla_{[X,Y]}Z$ 
 and $ R(X,Y,Z,V)=g(R(X,Y)Z,V)$. It has the well known properties
 \begin{equation}\label{r1}
 R(X,Y,Z,V)=-R(Y,X,Z,V)=-R(X,Y,V,Z).
 \end{equation}
  The first Bianchi identity for $\nabla$ can be written in the  form (see e.g. \cite{I,IP2,FI})
 \begin{equation}\label{1bi}
 \begin{split}
 R(X,Y,Z,V)+ R(Y,Z,X,V)+ R(Z,X,Y,V)\\
 =dT(X,Y,Z,V)-\sigma^T(X,Y,Z,V)+(\nabla_VT)(X,Y,Z).
 \end{split}
 \end{equation}
It is proved in \cite[p. 307]{FI} that the curvature of  a metric connection $\sb$ with totally skew-symmetric torsion $T$  satisfies  the  identity
 \begin{equation}\label{gen}
 \begin{split}
 R(X,Y,Z,V)+ R(Y,Z,X,V)+ R(Z,X,Y,V)-R(V,X,Y,Z)-R(V,Y,Z,X)-R(V,Z,X,Y)\\
 =\frac32dT(X,Y,Z,V)-\sigma^T(X,Y,Z,V).
 \end{split}
 \end{equation}
 We obtain from \eqref{gen} and \eqref{1bi} that  the curvature $R$ satisfies the relation \cite[Proposition~2.1]{IS}
 \begin{equation}\label{1bi1}
 \begin{split}
R(V,X,Y,Z)+R(V,Y,Z,X)+R(V,Z,X,Y)= -\frac12dT(X,Y,Z,V)+(\nabla_VT)(X,Y,Z).
 \end{split}
 \end{equation}
 
\begin{dfn} We say that the curvature $R$ satisfies the Riemannian first Bianchi identity if 
\begin{equation}\label{RB}
R(X,Y,Z,V)+R(Y,Z,X,V)+R(Z,X,Y,V)=0.
\end{equation}
\end{dfn}
 A well known algebraic fact is that \eqref{r1} and \eqref{RB} imply $R\in S^2\Lambda^2$, i.e.
 \begin{equation}\label{r4}
 R(X,Y,Z,V)=R(Z,V,X,Y).
 \end{equation}

 Note that, in general, \eqref{r1} and \eqref{r4} do not imply \eqref{RB}.

It is proved in \cite[Lemma~3.4]{I} that a metric connection $\sb$ with totally skew-symmetric torsion $T$ satifies \eqref{r4}  if and only if the the covariant derivative of the torsion with respect to either to the torsion connection, $\sb T$, or to the Levi-Civita connection, $\LC T$, is a $4$--form, 
\begin{lemma}\cite[Lemma~3.4]{I} The next equivalences hold for a metric connection with torsion 3-form
\begin{equation}\label{4form}
(\sb_XT)(Y,Z,V)=-(\sb_YT)(X,Z,V) \Longleftrightarrow R(X,Y,Z,V)=R(Z,V,X,Y)  \Longleftrightarrow dT=4\LC T.
\end{equation}
\end{lemma}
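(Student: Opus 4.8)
The plan is to prove the two equivalences $(i)\Leftrightarrow(iii)$ and $(i)\Leftrightarrow(ii)$, writing $(i)$ for the condition that $\sb T$ is a $4$-form, $(ii)$ for the pairwise symmetry \eqref{r4}, and $(iii)$ for $dT=4\LC T$. Observe first that $(i)$ as stated really is the $4$-form condition: since $(\sb_XT)(Y,Z,V)$ is already skew in $Y,Z,V$, the extra skewness in the first two slots supplied by $(i)$ produces the transposition $(X\,Y)$, and together with the symmetric group on $\{Y,Z,V\}$ this generates all of $S_4$ with matching signs. For $(i)\Leftrightarrow(iii)$ I would compute directly from \eqref{dh}. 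Assuming $(i)$, each of the cyclic terms $(\sb_XT)(Y,Z,V),(\sb_YT)(Z,X,V),(\sb_ZT)(X,Y,V)$ equals $(\sb_XT)(Y,Z,V)$, and $-(\sb_VT)(X,Y,Z)$ equals it as well, so \eqref{dh} collapses to $dT=4\sb T+2\sigma^T$; comparing with \eqref{dtnt} gives $dT=4\LC T$. Conversely, reading $dT=4\LC T$ as an identity of $(0,4)$-tensors shows $\LC T$ is a $4$-form, whence \eqref{dtnt} exhibits $\sb T=\LC T-\tfrac12\sigma^T$ as a difference of $4$-forms, which is $(i)$.

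For $(ii)\Rightarrow(i)$, let $\mathcal B$ and $\mathcal B'$ denote the first and second cyclic sums occurring in \eqref{gen}, so that \eqref{gen} reads $\mathcal B-\mathcal B'=\tfrac32dT-\sigma^T$ and \eqref{1bi} reads $\mathcal B=dT-\sigma^T+(\sb_VT)(X,Y,Z)$. The key point is that \eqref{r4}, applied in the form $R(V,X,Y,Z)=R(Y,Z,V,X)=-R(Y,Z,X,V)$ together with its two cyclic analogues, turns the second sum into the negative of the first, $\mathcal B'=-\mathcal B$. Then \eqref{gen} gives $2\mathcal B=\tfrac32dT-\sigma^T$, and substituting the value of $\mathcal B$ from \eqref{1bi} solves for $(\sb_VT)(X,Y,Z)=-\tfrac14dT(X,Y,Z,V)+\tfrac12\sigma^T(X,Y,Z,V)$, which is manifestly totally skew; this is $(i)$.

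For the remaining implication $(i)\Rightarrow(ii)$ I would use the purely algebraic identity, valid for any tensor satisfying \eqref{r1},
\[
\mathcal B(A,B,C,D)+\mathcal B(B,C,D,A)+\mathcal B(C,D,A,B)+\mathcal B(D,A,B,C)=-2\bigl[R(A,C,B,D)-R(B,D,A,C)\bigr],
\]
where $\mathcal B(P,Q,S,U)=R(P,Q,S,U)+R(Q,S,P,U)+R(S,P,Q,U)$. Replacing each summand by its value from \eqref{1bi}, the $4$-form contributions $dT$ and $\sigma^T$ cancel under the four-fold cyclic permutation, and when $\sb T$ is a $4$-form the surviving four-fold cyclic sum of the totally skew $\sb T$ vanishes as well; hence the right-hand side is zero, and relabeling $A=X,B=Z,C=Y,D=V$ yields \eqref{r4}. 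I expect the main obstacle to be the verification of this $S_4$ identity: it is elementary but bookkeeping-heavy, and the safest route is to expand all twelve curvature terms in the six independent components allowed by \eqref{r1} and check that everything except the defect $R(A,C,B,D)-R(B,D,A,C)$ cancels in pairs.
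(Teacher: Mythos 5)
Your proof is correct in all three steps; I checked the bookkeeping you flagged as the main risk. One point of comparison worth making explicit: the paper itself offers no proof of this lemma --- it is quoted from \cite[Lemma~3.4]{I} --- so what you have produced is a self-contained argument using only the paper's displayed identities \eqref{dh}, \eqref{dtnt}, \eqref{1bi} and \eqref{gen}, which is a genuine plus. In detail: your collapse of \eqref{dh} to $dT=4\sb T+2\sigma^T$ under condition (i) has the right signs (the two cyclic rearrangements are even permutations, and $-(\sb_VT)(X,Y,Z)$ contributes $+(\sb T)(X,Y,Z,V)$ since the $4$-cycle is odd), and the converse correctly uses only that $dT$ and $\sigma^T$ are $4$-forms, so that $\sb T=\LC T-\tfrac12\sigma^T$ inherits total skewness. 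In (ii)$\Rightarrow$(i), the computation $R(V,X,Y,Z)=R(Y,Z,V,X)=-R(Y,Z,X,V)$ and its two analogues do give $\mathcal B'=-\mathcal B$, whence $(\sb_VT)(X,Y,Z)=-\tfrac14 dT(X,Y,Z,V)+\tfrac12\sigma^T(X,Y,Z,V)$, which is totally skew and consistent with $dT=4\sb T+2\sigma^T$. For (i)$\Rightarrow$(ii), your $S_4$ identity is valid: the twelve curvature terms cancel in the four pairs $\{R(A,B,C,D),R(A,B,D,C)\}$, $\{R(B,C,A,D),R(B,C,D,A)\}$, $\{R(C,D,B,A),R(C,D,A,B)\}$, $\{R(D,A,C,B),R(D,A,B,C)\}$, and the surviving four terms equal $-2\bigl[R(A,C,B,D)-R(B,D,A,C)\bigr]$ after using \eqref{r1}; moreover the four-fold cyclic sums of $dT$, $\sigma^T$ and, under (i), of $\sb T$ all vanish because the cyclic shift is an odd permutation, so the signs alternate $+,-,+,-$. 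This last step is the classical algebraic mechanism by which \eqref{r1} together with the Riemannian first Bianchi identity forces $R\in S^2\Lambda^2$, here adapted to the nonzero Bianchi defect on the right-hand side of \eqref{1bi}; the original source proceeds by the same kind of cyclic summation of the first Bianchi identity, so your argument is essentially a clean reorganization of it, with \eqref{gen} serving as an efficient shortcut in the direction (ii)$\Rightarrow$(i) that avoids expanding the cyclic sum a second time.
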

It was shown  recently that a metric connection $\sb$ with torsion 3-form $T$ satisfies the Riemannian first Bianchi identity exactly when the next identities hold \cite[Theorem~1.2]{IS}
\begin{equation}\label{FBT}
 dT=-2\nabla T=\frac23\sigma^T.
\end{equation}
In this case, the torsion  $T$ is parallel with respect to the  metric connection with torsion 3-form $\frac13T$ \cite{AF}.

 The   Ricci tensors and scalar curvatures of the connections $\LC$ and $\sb$ are related by \cite[Section~2]{FI}  (see also \cite [Prop. 3.18]{GFS})
\begin{equation}\label{rics}
\begin{split}
Ric^g(X,Y)=Ric(X,Y)+\frac12 (\delta T)(X,Y)+\frac14T^2(X,Y),\quad T^2(X,Y)=\sum_{i=1}^ng\big(T(X,e_i),T(Y,e_i)\big),\\
Scal^g=Scal+\frac14||T||^2,\qquad Ric(X,Y)-Ric(Y,X)=-(\delta T)(X,Y),
\end{split}
\end{equation}
where $\delta=(-1)^{np+n+1}*d*$ is the co-differential acting on $p$-forms and $*$ is the Hodge star operator satisfying $*^2=(-1)^{p(n-p)}$.
One  has the general identities for $\alpha\in\Lambda^1$ and $\beta\in \Lambda^k$ 
\begin{equation}\label{star}
\begin{split}
*(\alpha\lrcorner\beta)=(-1)^{k+1}(\alpha\wedge*\beta), \qquad (\alpha\lrcorner\beta)=(-1)^{n(k+1)}*(\alpha\wedge*\beta),\\
*(\alpha\lrcorner*\beta)=(-1)^{n(k+1)+1}(\alpha\wedge\beta),\qquad (\alpha\lrcorner*\beta)=(-1)^{k}*(\alpha\wedge\beta).
\end{split}
\end{equation}
We shall use the next result established in \cite[Theorem~3.8]{IS}
\begin{thrm} \cite[Theorem~3.8]{IS}\label{s2ric}
Let  the curvature $R$ of a Ricci flat metric connection $\sb$ with  torsion 3-form $T$  satisfies $R\in S^2\Lambda^2$, i.e. \eqref{s2l2} holds. Then the norm of the torsion  is  constant, $||T||=const.$
\end{thrm}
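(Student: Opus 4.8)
The plan is to prove the \emph{pointwise} identity $d||T||^2=0$ (no compactness is needed), by rewriting it as a divergence identity for the Levi-Civita connection $\LC$ and comparing it with the classical contracted second Bianchi identity of $\LC$.

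First I would record the elementary consequences of the hypotheses. Taking the trace of $Ric=0$ gives $Scal=0$, and the skew part in \eqref{rics}, $Ric(X,Y)-Ric(Y,X)=-(\delta T)(X,Y)$ with $\delta$ the Hodge co-differential, gives $\delta T=0$. Since $R\in S^2\Lambda^2$, i.e. \eqref{r4} holds, Lemma \eqref{4form} says that $\sb T$ is a $4$-form, so $(\sb_{e_i}T)(X,e_j,e_k)=-(\sb_XT)(e_i,e_j,e_k)$. Introducing the symmetric tensor $q(X,Y):=\sum_ig\big(T(X,e_i),T(Y,e_i)\big)$, for which $\operatorname{tr}q=||T||^2$, the first two lines of \eqref{rics} with $Ric=0$, $\delta T=0$ reduce to
\begin{equation*}
Ric^g=\tfrac14\,q,\qquad Scal^g=\tfrac14||T||^2.
\end{equation*}

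The heart of the proof is to evaluate $\operatorname{div}^gq(X):=\sum_i(\LC_{e_i}q)(e_i,X)$ in two independent ways. On the one hand, the Riemannian contracted second Bianchi identity $\operatorname{div}^gRic^g=\tfrac12\,dScal^g$ applied to the two reductions above gives $\operatorname{div}^gq=\tfrac12\,d||T||^2$. On the other hand, I would differentiate $q$ directly and use \eqref{dtnt} to substitute $\LC T=\sb T+\tfrac12\sigma^T$. Two contributions then vanish: the term carrying $\sum_i(\LC_{e_i}T)(e_i,\cdot,\cdot)=-(\delta T)(\cdot,\cdot)$ drops out because $\delta T=0$, and the $\sigma^T$-contribution drops out because of the algebraic identity $\sum_{ijk}\sigma^T(X,e_i,e_j,e_k)\,T(e_i,e_j,e_k)=0$. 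The only surviving term, after using the $4$-form property to move the free index onto $X$, is $-\sum_{ijk}T(e_i,e_j,e_k)(\sb_XT)(e_i,e_j,e_k)$; replacing $\sb T$ by $\LC T-\tfrac12\sigma^T$ and invoking the same $\sigma^T$-identity once more identifies it with $-\tfrac12\,d||T||^2$. Thus $\operatorname{div}^gq=-\tfrac12\,d||T||^2$, and comparing the two evaluations forces $d||T||^2=0$, so $||T||$ is constant.

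The main obstacle is the direct divergence computation, and within it the algebraic identity $\sum_{ijk}\sigma^T(X,e_i,e_j,e_k)T(e_i,e_j,e_k)=0$: expanding $\sigma^T$ through its definition \eqref{sigma} turns the left-hand side into three contractions of $T$ against the tensor $q$, each of which vanishes because $q$ is symmetric while the remaining $T$-factor is skew in the contracted index pair. I would stress that the argument is not circular: the two computations produce the \emph{opposite} coefficients $\pm\tfrac12$ in front of $d||T||^2$, and this genuine discrepancy — which rests on Ricci-flatness through $Ric^g=\tfrac14 q$ and on the $S^2\Lambda^2$-symmetry through the $4$-form property of $\sb T$ — is precisely what yields $d||T||^2=0$.
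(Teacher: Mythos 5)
Your proof is correct, and it takes a genuinely different route from the one the paper relies on: the theorem is quoted from \cite[Theorem~3.8]{IS}, whose argument traces the \emph{differential} second Bianchi identity of the torsion connection $\sb$ itself, deriving a contracted identity of the schematic form $\sb_iScal-2\sb_jRic_{ij}+(\text{torsion terms})=0$ and then specializing it under $Ric=0$ and the $4$-form property of $\sb T$. You bypass the Bianchi identity for $\sb$ entirely: the only input you extract from the symmetry hypothesis \eqref{r4} is the purely algebraic equivalence \eqref{4form} (that $\sb T$ is a $4$-form), and the one differential identity you invoke is the classical Riemannian contracted second Bianchi identity, applied to $Ric^g=\frac14 q$ and $Scal^g=\frac14\|T\|^2$, which indeed follow from \eqref{rics} once $Ric=0$ forces $\delta T=0$. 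Both arguments ultimately hinge on the same two computational facts, namely the contraction identity $\sigma^T_{ijkl}T_{ijk}=0$ (your symmetric-against-skew justification of it is exactly right) and the conversion $\LC T=\sb T+\frac12\sigma^T$ of \eqref{dtnt}, so the mechanism is shared; but your organization is more economical for this particular statement. One small streamlining: your final detour through $\sb T=\LC T-\frac12\sigma^T$ is harmless but unnecessary, since metricity of $\sb$ already gives $T_{iab}\sb_kT_{iab}=\frac12\partial_k\|T\|^2$, so the surviving term $T_{iab}\sb_iT_{kab}=-T_{iab}\sb_kT_{iab}=-\frac12\partial_k\|T\|^2$ follows from the $4$-form property alone. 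What the paper's route buys in exchange is generality: the contracted second Bianchi identity for torsion connections holds with no Ricci-flatness or $S^2\Lambda^2$ assumption and is reused elsewhere (for example identity \eqref{e1} in the proof of Theorem~\ref{closTt}); what yours buys is a short, self-contained derivation in which the opposite coefficients $\pm\frac12$ on the two evaluations of $\operatorname{div}^gq$ make it transparent that $d\|T\|^2=0$ is a pointwise identity requiring no compactness, consistent with the statement as given.
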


\section{$Spin(7)$--structure}

We briefly recall the notion of a $Spin(7)$--structure. Consider
${\mathbb R}^8$ endowed with an orientation and its standard inner
product. Consider the 4-form $\Phi$ on ${\mathbb R}^8$ given by
\begin{eqnarray}\label{s1}
\Phi &=&-e_{0127} +e_{0236} - e_{0347}-e_{0567} +e_{0146} + e_{0245} -
  e_{0135}
 \\ \nonumber&\phantom{=}&
-e_{3456} - e_{1457} - e_{1256}-e_{1234} - e_{2357} -
  e_{1367} +e_{2467},\nonumber
\end{eqnarray}
where $e_{ijkl}$ denotes the 4-form $e_i\wedge e_j\wedge e_k\wedge e_l$.

The 4-form  $\Phi$ is self-dual, $*\Phi=\Phi$, and the 8-form $\Phi\wedge\Phi$ coincides with
 14 times the volume form of ${\mathbb R}^8$. The subgroup of $GL(8,\mathbb R)$ which
fixes $\Phi$ is isomorphic to the double covering $Spin(7)$ of
$SO(7)$ \cite{Br}. Moreover, $Spin(7)$ is a compact
simply-connected Lie group of dimension 21 \cite{Br}. The Lie algebra of $Spin(7)$ is
denoted by $spin(7)$ and it is isomorphic to the 2-forms satisfying  linear equations, namely
 $spin(7)\cong  \{\alpha \in
\Lambda^2(M)|*(\alpha\wedge\ph)=\alpha\}$. We note here the sign difference with \cite{Br}.

The 4-form
$\Phi$ corresponds to a real spinor $\phi$ and therefore,
$Spin(7)$ can be identified as the isotropy group of a non-trivial
real spinor.

We let the expression
$$
 \ph=\frac1{24}\ph_{ijkl}e_{ijkl}
$$
and thus have the  identites (c.f.  \cite{GMW,Kar2})
\begin{eqnarray}\label{p1}\nonumber
\Phi_{ijpq}\Phi_{ijpq} & = & 336;
\\\nonumber
\Phi_{ijpq}\Phi_{ajpq} &=& 42\delta_{ia};
\\\nonumber
\Phi_{ijpq}\Phi_{klpq} &=& 6\delta_{ik}\delta_{jl} -
  6\delta_{il}\delta_{jk} - 4\Phi_{ijkl};\\\label{iden}
\Phi_{ijks}\Phi_{abcs} &=& \delta_{ia} \delta_{jb} \delta_{kc} +\delta_{ib} \delta_{jc} \delta_{ka} +\delta_{ic} \delta_{ja} \delta_{kb}\\\nonumber
&-& \delta_{ia} \delta_{jc} \delta_{kb} -\delta_{ib} \delta_{ja} \delta_{kc} -\delta_{ic} \delta_{jb} \delta_{ka} \\\nonumber
&-& \delta_{ia}\Phi_{jkbc}-\delta_{ja}\Phi_{kibc}-\delta_{ka}\Phi_{ijbc}\\\nonumber
&-& \delta_{ib}\Phi_{jkca}-\delta_{jb}\Phi_{kica}-\delta_{kb}\Phi_{ijca}\\\nonumber
&-& \delta_{ic}\Phi_{jkab}-\delta_{jc}\Phi_{kiab}-\delta_{kc}\Phi_{ijab}.
\end{eqnarray}

A \emph{$Spin(7)$--structure} on an 8-manifold $M$ is by definition
a reduction of the structure group of the tangent bundle to
$Spin(7)$; we shall also say that $M$ is a \emph{$Spin(7)$--manifold}. This can be described geometrically by saying that
there exists a nowhere vanishing global differential 4-form $\Phi$
on $M$ which can be locally written as \eqref{s1}. The 4-form
$\Phi$ is called the \emph{fundamental form} of the $Spin(7)$--manifold $M$ \cite{Bo}.  Alternatively, a $Spin(7)$--structure can be described by the existence of three-fold
vector cross product  on the tangent spaces of $M$ (see e.g. \cite{Gr}).

The fundamental form of a $Spin(7)$--manifold determines a
Riemannian metric  $g$ which % \emph{implicitly} through
  %$g_{ij}=\frac{1}{42}\Phi_{iklm}\Phi_{jklm}$.   This 
  is referred  to as the   metric induced by $\Phi$. We write $\LC$ for the associated Levi-Civita
connection and  $||.||^2$ for the tensor norm with respect to $g$. Note the difference by a factor $k!$ with the norm of a k-form (see Convention~\ref{conv}).

In addition, we will freely identify
vectors and co-vectors via the induced metric $g$.

In general, not every  compact 8-dimensional Riemannian spin manifold $M^8$
admits a $Spin(7)$--structure. We explain the precise condition
\cite{LM}. Denote by $p_1(M), p_2(M), {\mathbb X}(M), {\mathbb
X}(S_{\pm})$ the first and the second Pontrjagin classes, the
Euler characteristic of $M$ and the Euler characteristic of the
positive and the negative spinor bundles, respectively. It is well
known \cite{LM} that a compact spin 8-manifold admits a $Spin(7)$--structure if and only if ${\mathbb X}(S_+)=0$ or ${\mathbb X}(S_-)=0$.
The latter conditions are equivalent to $
p_1^2(M)-4p_2(M)+ 8{\mathbb X}(M)=0$, for an appropriate choice
of the orientation \cite{LM}.

Let us recall that a $Spin(7)$--manifold $(M,g,\Phi)$ is said to be
parallel (torsion-free) if the holonomy $Hol(g)$ of the metric $g$ is a subgroup of $Spin(7)$. This is equivalent to saying
that the fundamental form $\Phi$ is parallel with respect to the
Levi-Civita connection of the metric $g$, $\nabla^g\ph=0$.  

M. Fernandez shows in \cite{F} that
$Hol(g)\subset Spin(7)$ if and only if $d\Phi=0$ which is equivalent to $\delta\ph=0$ since $\ph$ is self-dual 4-form  (see also \cite{Br,Sal}).  It was observed  by Bonan that any parallel $Spin(7)$--manifold is Ricci flat
\cite{Bo}. The first known explicit example of complete parallel $Spin(7)$--manifold with $Hol(g)=Spin(7)$ was constructed by Bryant and
Salamon \cite{BS,Gibb}.
The first compact examples of parallel $Spin(7)$--manifolds with
$Hol(g)=Spin(7)$ were constructed by Joyce \cite{J1,J2}.

There are 4 classes of $Spin(7)$--manifolds according to the
Fernandez classification \cite{F} obtained as irreducible $Spin(7)$
representations of  the space $\nabla^g\Phi$.

The Lee form $\theta$ is defined by \cite{C1}
\begin{equation}\label{g2li}
\theta = -\frac{1}{7}*(*d\Phi\wedge\Phi)=\frac17*(\delta\Phi\wedge
\Phi)=\frac1{7}(\delta\ph)\lrcorner\ph,\quad \theta_a=\frac1{42}(\delta\ph)_{ijk}\ph_{ijka},
\end{equation}
where  $\delta=-*d*$ is the co-differential acting on $k$-forms in dimension eight.

The 4 classes of Fernandez classification \cite{F} can be described in
terms of the Lee form as follows \cite{C1}: $W_0 : d\Phi=0; \quad
W_1 : \theta =0; \quad W_2 : d\Phi = \theta\wedge\Phi; \quad W :
W=W_1\oplus W_2.$

A $Spin(7)$--structure of the class $W_1$ (i.e.
$Spin(7)$--structure with zero Lee form) is called
 \emph{a balanced $Spin(7)$--structure}.
If the Lee form is closed, $d\theta=0,$ then the $Spin(7)$--structure is
locally conformally equivalent to a balanced one \cite{I} (see also \cite{Kar1,Kar2}).
It is known due to  \cite{C1} that the Lee form of a $Spin(7)$--structure in the class $W_2$ is closed and therefore such a
manifold is locally conformally equivalent to a parallel $Spin(7)$--manifold. 

If $M$ is compact then it is shown in \cite[Theorem~4.3]{I} that  in every conformal class of  $Spin(7)$--structures $[\ph]$ there exists a unique $Spin(7)$--structure with co-closed Lee form, $\delta\theta=0$. The compact $Spin(7)$--spaces with closed but not exact Lee form
(i.e. the structure is locally but not globally
conformally parallel) have very different topology than the parallel ones
\cite{I,IPP}.

Coeffective cohomology and coeffective numbers of %Riemannian manifolds with 
a $Spin(7)$--manifold are studied in \cite{Ug}.

\subsection{Decomposition of the space of forms} We take the following description of the decomposition of the space of forms from \cite{Kar2}.

Let $(M, \ph)$ be a $Spin(7)$--manifold. The action of $Spin(7)$  on the tangent space induces an
action of $Spin(7)$ on $\Lambda^k(M)$ splitting the exterior algebra into orthogonal irreducible $Spin(7)$  subspaces, where
$\Lambda^k_l$ corresponds to an $l$-dimensional $Spin(7)$-irreducible subspace of $\Lambda^k$:
\begin{equation*}\label{dec}
\begin{split}
\Lambda^2(M)=\Lambda^2_7\oplus\Lambda^2_{21}, \qquad \Lambda^3(M)=\Lambda^3_8\oplus\Lambda^3_{48},\qquad
\Lambda^4(M)=\Lambda^4_1\oplus\Lambda^4_7\oplus\Lambda^4_{27}\oplus\Lambda^4_{35},
\end{split}
\end{equation*}
where
\begin{equation}\label{dec2}
\begin{split}
\Lambda^2_7=\{\phi\in \Lambda^2(M) | *(\phi\wedge\ph)=-3\phi\};\\
\Lambda^2_{21}=\{\phi\in \Lambda^2(M) | *(\phi\wedge\ph)=\phi\}\cong spin(7);\\
\Lambda^3_8=\{*(\alpha\wedge\ph) |  \alpha\in\Lambda^1\}=\{\alpha\lrcorner\ph\};\\
\Lambda^3_{48}=\{\gamma\in \Lambda^3(M) | \gamma\wedge\ph=0\}.
\end{split}
\end{equation}
Hence, a 2-form $\phi$ decomposes into two $Spin(7)$--invariant parts, $\Lambda^2=\Lambda^2_7\oplus\Lambda^2_{21}$, and
\begin{equation*}
\begin{split}
\phi\in \Lambda^2_7 \Leftrightarrow \phi_{ij}\ph_{ijkl}=-6\phi_{kl},\\
\phi\in \Lambda^2_{21} \Leftrightarrow \phi_{ij}\ph_{ijkl}=2\phi_{kl}.
\end{split}
\end{equation*}
Moreover, following \cite{Kar2} one  considers the operator $D:\Lambda^2\longrightarrow\Lambda^4$ defined for a 2--form $\alpha$ by
\[(D\alpha)_{ijkl}=\alpha_{is}\ph_{sjkl}+\alpha_{js}\ph_{iskl}+\alpha_{ks}\ph_{ijsl}+\alpha_{sl}\ph_{ijks}.
\]
\begin{prop}\cite[Proposition~2.3]{Kar2}\label{spin7}
The kernel of $D$ is isomorphic to $\Lambda^2_{21}\cong spin(7)$.
\end{prop}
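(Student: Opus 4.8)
The final statement to prove is Proposition~\ref{spin7}: the kernel of the operator $D:\Lambda^2\to\Lambda^4$ is isomorphic to $\Lambda^2_{21}\cong spin(7)$.

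\medskip

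The plan is to work entirely at the level of the pointwise linear algebra on a single fiber, identifying $\Lambda^2$ with the $28$-dimensional space of antisymmetric $8\times 8$ matrices $\alpha_{ij}$ and computing $D\alpha$ explicitly via the contraction identities for $\Phi$ already recorded in \eqref{iden}. First I would observe that $D$ is manifestly $Spin(7)$-equivariant, since it is built only from $\ph$ (which is $Spin(7)$-invariant) and the metric; hence its kernel is a $Spin(7)$-submodule of $\Lambda^2=\Lambda^2_7\oplus\Lambda^2_{21}$. By Schur's lemma the kernel must be one of $0$, $\Lambda^2_7$, $\Lambda^2_{21}$, or all of $\Lambda^2$, so it suffices to determine the behaviour of $D$ on each irreducible summand. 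Because the two summands are inequivalent and have different dimensions ($7$ and $21$), this reduces the whole problem to evaluating $D$ on just two representative forms, one from each piece.

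\medskip

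The computational core is to contract $(D\alpha)_{ijkl}$ against $\ph_{ijkl}$, or equivalently to evaluate $D$ on the characterizing eigenvalue equations $\phi_{ij}\ph_{ijkl}=2\phi_{kl}$ (for $\Lambda^2_{21}$) and $\phi_{ij}\ph_{ijkl}=-6\phi_{kl}$ (for $\Lambda^2_7$). Concretely I would take the defining expression
\[
(D\alpha)_{ijkl}=\alpha_{is}\ph_{sjkl}+\alpha_{js}\ph_{iskl}+\alpha_{ks}\ph_{ijsl}+\alpha_{sl}\ph_{ijks},
\]
and for $\alpha\in\Lambda^2_{21}$ substitute $\alpha_{is}\ph_{sjkl}$ using the contraction $\Phi_{ijpq}\Phi_{klpq}=6\delta_{ik}\delta_{jl}-6\delta_{il}\delta_{jk}-4\Phi_{ijkl}$ from \eqref{iden}, after first re-expressing the component $\alpha_{is}$ itself through $\alpha_{ab}\ph_{abij}=2\alpha_{ij}$. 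The aim is to show that for $\alpha\in\Lambda^2_{21}$ the four terms cancel identically, so $\Lambda^2_{21}\subseteq\ker D$, while for $\alpha\in\Lambda^2_7$ the same manipulation yields a nonzero multiple of $D\alpha$ (the eigenvalue $-6$ versus $+2$ is exactly what changes the sign/coefficient and prevents cancellation), so that $D$ is injective on $\Lambda^2_7$. Since $\ker D$ is a submodule, the combination $\Lambda^2_{21}\subseteq\ker D$ and $\Lambda^2_7\cap\ker D=0$ forces $\ker D=\Lambda^2_{21}$ exactly.

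\medskip

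The main obstacle I anticipate is bookkeeping in the index contraction: the term $\alpha_{is}\ph_{sjkl}$ pairs a free index $j,k,l$ against three of the four slots of $\ph$ while $s$ is summed against one slot of $\alpha$, so the clean quartic identity $\Phi\Phi$ does not apply directly and one must instead feed in the eigenvalue relation to convert $\alpha_{is}$ into a $\ph$-contracted expression before the quartic identity becomes usable. Getting the antisymmetrization over $ijkl$ correct, and correctly tracking the many Kronecker-delta and $\Phi$ terms generated by \eqref{iden}, is where errors creep in. A cleaner alternative that sidesteps the worst of this is the representation-theoretic shortcut: once equivariance gives that $\ker D$ is a submodule and Schur's lemma restricts it to a sum of the summands $\Lambda^2_7,\Lambda^2_{21}$, it is enough to test $D$ on one explicitly chosen generator of each summand (for instance a form built from the standard basis $e_{ij}$ adapted to \eqref{s1}) and check directly that $D$ annihilates the $\Lambda^2_{21}$ generator but not the $\Lambda^2_7$ one. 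I would present the equivariance-plus-Schur reduction as the conceptual skeleton and carry out the single contraction on $\Lambda^2_{21}$ as the decisive computation, relegating the $\Lambda^2_7$ check to a one-line eigenvalue remark.
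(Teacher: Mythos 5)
Your proposal is correct in outline, but it takes a genuinely different, and considerably heavier, route than the source: the paper itself gives no proof of this statement, quoting it from \cite[Proposition~2.3]{Kar2}, where the argument is a one-line orbit--stabilizer observation. Up to the sign of the last term (read $\alpha_{ls}\ph_{ijks}$ for the printed $\alpha_{sl}\ph_{ijks}$, so that $D\alpha$ is a genuine $4$-form), $(D\alpha)_{ijkl}$ is precisely the infinitesimal action of $\alpha\in\Lambda^2\cong so(8)$ on the fundamental form $\Phi$, i.e. the derivative at $t=0$ of $\Phi$ along the one-parameter group generated by $\alpha$; since $Spin(7)$ is by definition the stabilizer of $\Phi$ (as recalled after \eqref{s1}), the kernel of this action is the isotropy Lie algebra $spin(7)\cong\Lambda^2_{21}$, and injectivity on $\Lambda^2_7$ comes for free --- no contraction identities are needed at all. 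Your equivariance-plus-Schur skeleton is valid ($\Lambda^2_7$ and $\Lambda^2_{21}$ are irreducible and inequivalent, so $\ker D$ must be one of $0$, $\Lambda^2_7$, $\Lambda^2_{21}$, $\Lambda^2$), and it correctly reduces everything to two test evaluations. One caveat on your computational core: the parenthetical suggestion to contract $(D\alpha)_{ijkl}$ against $\ph_{ijkl}$ is vacuous, since by the identity $\Phi_{ijpq}\Phi_{ajpq}=42\delta_{ia}$ in \eqref{p1} each of the four terms of that contraction is a multiple of the trace of $\alpha$, hence zero for \emph{every} $2$-form; likewise the one-index contraction $(D\alpha)_{ijkl}\ph_{mjkl}$ only shows $D\alpha\in\Lambda^4_{27}$ by \eqref{dec4}, so neither trace suffices for the containment $\Lambda^2_{21}\subseteq\ker D$, and you must carry out the cancellation of the full $4$-tensor as you propose (substituting $\alpha_{is}=\frac12\alpha_{ab}\ph_{abis}$ and expanding via the quartic identity in \eqref{iden}), which is feasible but heavy. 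For ruling out $\Lambda^2_7\subseteq\ker D$ a single test form does suffice, as you say. In short: your argument works, but the Lie-theoretic identification of $D$ as the infinitesimal action sidesteps all of the index bookkeeping, which is why the cited proof is essentially immediate.
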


For $k>4$ we have $\Lambda^k_l=*\Lambda^{8-k}_l$.

For $k=4$, following \cite{Kar2}, one considers the operator $\Omega_{\ph}:\Lambda^4 \longrightarrow\Lambda^4$ defined as follows
\begin{equation}\label{op}
\begin{split}
(\Omega_{\ph}(\sigma))_{ijkl}=\sigma_{ijpq}\ph_{pqkl}+\sigma_{ikpq}\ph_{pqlj}+\sigma_{ilpq}\ph_{pqjk}+\sigma_{jkpq}\ph_{pqil}+\sigma_{jlpq}\ph_{pqki}+\sigma_{klpq}\ph_{pqij}.
\end{split}
\end{equation}
\begin{prop}\cite[Proposition~2.8]{Kar2}\label{427}
The spaces $\Lambda^4_1,\Lambda^4_7,\Lambda^4_{27},\Lambda^4_{35}$ are all eigenspaces of the operator $\Omega_{\ph}$ with distinct eigenvalues. Specifically,
\begin{equation}\label{dec4}
\begin{split}
\Lambda^4_1=\{\sigma\in\Lambda^4:\Omega_{\ph}(\sigma)=-24\sigma\};\qquad \Lambda^4_7=\{\sigma\in\Lambda^4:\Omega_{\ph}(\sigma)=-12\sigma\};\\\Lambda^4_{27}=\{\sigma\in\Lambda^4:\Omega_{\ph}(\sigma)=4\sigma\}=\{\sigma\in\Lambda^4:\sigma_{ijkl}\ph_{mjkl}=0\};\qquad \Lambda^4_{35}=\{\sigma\in\Lambda^4:\Omega_{\ph}(\sigma)=0\};\\
\Lambda^4_+=\{\sigma\in\Lambda^4:*\sigma=\sigma\}=\Lambda^4_1\oplus\Lambda^4_7\oplus\Lambda^4_{27};\qquad \Lambda^4_-=\{\sigma\in\Lambda^4:*\sigma=-\sigma\}=\Lambda^4_{35}.
\end{split}
\end{equation}
\end{prop}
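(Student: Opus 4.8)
The plan is to combine the $Spin(7)$-equivariance of $\Omega_{\ph}$ with Schur's lemma, so that the proposition reduces to four scalar evaluations. Since $\Omega_{\ph}$ in \eqref{op} is assembled purely from the $Spin(7)$-invariant tensor $\Phi$ by natural index contractions, it is a $Spin(7)$-equivariant endomorphism of $\Lambda^4$, and one checks directly from \eqref{op} that it is self-adjoint with respect to the induced metric (the six summands symmetrize the contraction over the three complementary pair-splittings of $\{i,j,k,l\}$). Being self-adjoint it is diagonalizable with real eigenvalues and mutually orthogonal eigenspaces, and being equivariant its eigenspaces are $Spin(7)$-submodules of $\Lambda^4$. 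Because $\Lambda^4=\Lambda^4_1\oplus\Lambda^4_7\oplus\Lambda^4_{27}\oplus\Lambda^4_{35}$ is a decomposition into irreducibles of pairwise distinct dimensions $1,7,27,35$, no two factors are isomorphic, so each factor must sit inside a single eigenspace. Thus $\Omega_{\ph}$ acts on $\Lambda^4_l$ as a scalar $\lambda_l$, and once the four scalars are shown to be distinct the inclusions are forced to be equalities.

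It then remains to evaluate $\lambda_1,\lambda_7,\lambda_{27},\lambda_{35}$, and for each I would apply $\Omega_{\ph}$ to one explicit representative, reducing every term through the quadratic identity $\Phi_{ijpq}\Phi_{klpq}=6\delta_{ik}\delta_{jl}-6\delta_{il}\delta_{jk}-4\Phi_{ijkl}$ from \eqref{iden}. For $\Lambda^4_1$ take $\sigma=\Phi$: substituting this identity into each of the six terms of \eqref{op} and reordering every resulting $\Phi_{\cdots}$ back to $\Phi_{ijkl}$, the Kronecker-delta products cancel in pairs while the six copies of $-4\Phi_{ijkl}$ combine to $-24\Phi_{ijkl}$, so $\lambda_1=-24$. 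For $\Lambda^4_7$ I would use $\sigma=D\phi$ with $\phi\in\Lambda^2_7$, which is nonzero by Proposition~\ref{spin7} (since $\ker D=\Lambda^2_{21}$), feeding the defining contraction $\phi_{ij}\Phi_{ijkl}=-6\phi_{kl}$ from \eqref{dec2} into the identity to obtain $\lambda_7=-12$. For $\Lambda^4_{27}$ I would take any nonzero self-dual form satisfying the $\Phi$-trace-free condition $\sigma_{ijkl}\Phi_{mjkl}=0$ and compute $\lambda_{27}=4$, while for $\Lambda^4_{35}=\Lambda^4_-$ an anti-self-dual representative gives $\lambda_{35}=0$. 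Since $-24,-12,4,0$ are distinct, the four summands are exactly the eigenspaces, which is the content of \eqref{dec4}.

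The main obstacle is the eigenvalue computation on $\Lambda^4_7$ and $\Lambda^4_{27}$. Unlike the $\Phi$-case, these require first fixing a workable representative and then pushing its defining linear constraint through all six symmetrized contractions of \eqref{op}, with careful bookkeeping of the index permutations that reorder each $\Phi_{\cdots}$ into the reference ordering $\Phi_{ijkl}$; the cancellations among the delta-terms are delicate and easy to mismanage. The self-dual/anti-self-dual split helps by decoupling $\Lambda^4_{35}$ from the other three summands and making $\lambda_{35}=0$ accessible directly. A convenient global consistency check is the trace relation $\lambda_1+7\lambda_7+27\lambda_{27}+35\lambda_{35}=\operatorname{tr}\Omega_{\ph}$: the computed values give weighted sum $-24-84+108+0=0$, which matches the vanishing of $\operatorname{tr}\Omega_{\ph}$ and guards against arithmetic slips in the harder cases.
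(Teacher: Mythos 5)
First, a point of comparison: the paper itself offers no proof of this proposition --- it is quoted directly from Karigiannis \cite{Kar2}, so there is no internal argument to match against. On its own merits, your strategy (equivariance of $\Omega_{\ph}$, termwise self-adjointness, Schur's lemma on the multiplicity-free decomposition $\Lambda^4=\Lambda^4_1\oplus\Lambda^4_7\oplus\Lambda^4_{27}\oplus\Lambda^4_{35}$, then scalar evaluation on representatives) is the standard route and essentially the one in the cited source. Your multiplicity-one argument forcing each summand into a single eigenspace is sound, self-adjointness does follow termwise by relabelling $(p,q)\leftrightarrow(k,l)$ and using $\ph_{pqkl}=\ph_{klpq}$, and your computation $\Omega_{\ph}(\ph)=-24\ph$ is correct: each of the six terms contributes $-4\ph_{ijkl}$ via the quadratic identity in \eqref{iden}, and the delta-products cancel in pairs exactly as you say. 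The trace check $-24-84+108+0=0$ is also valid, since each term of \eqref{op} has vanishing trace (the diagonal contribution contracts $\ph$ over a repeated antisymmetric pair).

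There are, however, two places where your plan assumes parts of the very statement being proved. First, you invoke $\Lambda^4_-=\Lambda^4_{35}$ in order to take an anti-self-dual representative and get $\lambda_{35}=0$; but that identification is one of the assertions of \eqref{dec4}. It must be derived: $*$ is equivariant with $*^2=\mathrm{id}$ on $\Lambda^4$, hence acts as $\pm1$ on each irreducible summand; since $*\ph=\ph$ puts $\Lambda^4_1\subset\Lambda^4_+$ and $\dim\Lambda^4_\pm=35$, the only consistent assignment is $\Lambda^4_+=\Lambda^4_1\oplus\Lambda^4_7\oplus\Lambda^4_{27}$ and $\Lambda^4_-=\Lambda^4_{35}$; one then still needs $\Omega_{\ph}\sigma=0$ on one explicit anti-self-dual form such as $e_{0123}-e_{4567}$. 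Second, the equality $\Lambda^4_{27}=\{\sigma:\sigma_{ijkl}\ph_{mjkl}=0\}$ is never established in your plan --- you only use the trace-free condition to select a representative, which presupposes the nontrivial inclusion. The repair is again Schur: the $\ph$-trace $P(\sigma)_{mi}=\sigma_{ijkl}\ph_{mjkl}$ is equivariant into $\otimes^2\mathbb{R}^8\cong 1\oplus 35\oplus 7\oplus 21$, which contains no $27$-dimensional summand, so $\Lambda^4_{27}\subseteq\ker P$; equality requires checking $P\neq0$ on the other three summands (on $\Lambda^4_1$ this is immediate, $P(\ph)=42\,\delta$ by \eqref{iden}). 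Two smaller repairs: for $\lambda_7$ the quadratic identity alone does not suffice, since cross terms of the form $\phi_{ps}\ph_{ijsq}\ph_{pqkl}$ contract only one index pair and need the single-contraction (cubic) identity in \eqref{iden}; and your trace relation can be promoted from a consistency check to the actual computation of $\lambda_{27}=4$ from $\lambda_1,\lambda_7,\lambda_{35}$, which neatly sidesteps the representative problem for $\Lambda^4_{27}$ altogether.
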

%If $\LC\Phi=0$ then the holonomy $Hol(g)$ of the metric is a subgroup of $Spin(7)$ and
%$Hol(g)\subset Spin(7)$ if and only if $d\Phi=0$ by the result of M. Fernandez in \cite{F} (see
%also \cite{Br,Sal}).

\section{The $Spin(7)$--connection with skew-symmetric torsion}
The existence of parallel spinors with respect to a metric connection with torsion 3-form  in dimension 8 is important in supersymmetric string theories since the number of parallel spinors determines the number of preserved supersymmetries and this is the first Killing spinor equation in the heterotic Strominger system in dimension eight \cite{GKMW,GMW,GMPW, MS}. %The presence of a parallel spinor with respect to a metric connection with torsion 3-form leads to the reduction of the holonomy group of the torsion connection to a subgroup of $Spin(7)$. 
It is shown in \cite{I} that any $Spin(7)$--manifold $(M,\ph)$ admits a unique $Spin(7)$--connection with totally skew-symmetric torsion. 
\begin{thrm}\cite[Theorem~1]{I}
Let $(M,\ph)$  be a $Spin(7)$--manifold with fundamental 4-form $\ph$. There always exists a unique linear connection $\sb$ preserving the $Spin(7)$--structure, $\sb\ph=\sb g=0,$ with totally skew-symmetric torsion $T$ given by
\begin{equation}\label{torcy}
T=-*d\ph+\frac76*(\theta\wedge\ph)=\delta\ph+\frac76\theta\lrcorner\ph,
\end{equation}
where the Lee form $\theta$ is given by \eqref{g2li}.
\end{thrm}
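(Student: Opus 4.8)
The plan is to read off the torsion directly from the defining condition $\sb\ph=0$, regarding it as a linear equation for $T$, and to use $Spin(7)$ representation theory to show that this equation has a unique solution.

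First I would parametrize. By \eqref{tsym} every metric connection with totally skew-symmetric torsion has the form $\sb_XY=\LC_XY+\frac12 T(X,Y)$ with $T\in\Lambda^3$, and $\sb g=0$ is automatic for skew $T$; hence the only genuine requirement is $\sb\ph=0$. Expanding this, with the convention that $T(X,Y)$ denotes the vector dual to $T(X,Y,\cdot)$, the condition becomes the pointwise linear relation
\begin{equation*}
(\LC_X\ph)(e_i,e_j,e_k,e_l)=\tfrac12\big[\ph(T(X,e_i),e_j,e_k,e_l)+\ph(e_i,T(X,e_j),e_k,e_l)+\ph(e_i,e_j,T(X,e_k),e_l)+\ph(e_i,e_j,e_k,T(X,e_l))\big],
\end{equation*}
whose left-hand side --- the intrinsic torsion $\LC\ph$ --- is fixed by the structure. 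For each $X$ the right-hand side is exactly $\frac12 D(X\lrcorner T)$, where $D$ is the operator of Proposition~\ref{spin7} and $X\lrcorner T$ is the $2$-form $T(X,\cdot,\cdot)$; so existence and uniqueness of $T$ reduce to inverting the map $T\mapsto\big(X\mapsto D(X\lrcorner T)\big)$.

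Next I would identify this as an isomorphism of $Spin(7)$-modules. By Proposition~\ref{spin7} the kernel of $D$ is exactly $\Lambda^2_{21}\cong spin(7)$, so only the $\Lambda^2_7$-component of $X\lrcorner T$ survives and $D$ is injective on $\Lambda^2_7$; the target is therefore $T^*M\otimes\Lambda^2_7$, which is $56$-dimensional and, like the source $\Lambda^3=\Lambda^3_8\oplus\Lambda^3_{48}$ of \eqref{dec2}, carries the $Spin(7)$-decomposition ${\mathbb R}^8\oplus{\mathbb R}^{48}$ reproducing the Fernandez classes. Since the contraction map is $Spin(7)$-equivariant between modules with the same multiplicity-free decomposition, Schur's lemma reduces invertibility to checking it is nonzero on each of $\Lambda^3_8$ and $\Lambda^3_{48}$, which I would verify on representative elements using the contraction identities \eqref{iden}. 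Because $\LC\ph$ automatically lies in the image (being tangent to the $Spin(7)$-orbit of $\ph$), invertibility yields a unique $T$, proving existence and uniqueness simultaneously.

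Finally I would extract the explicit formula \eqref{torcy}. Contracting the defining relation once more with $\ph$ and simplifying with \eqref{iden} expresses the solution through $\delta\ph$ and the Lee form: the $\Lambda^3_{48}$-component is governed by $\delta\ph$, while the $\Lambda^3_8$-component reduces to a multiple of $\theta\lrcorner\ph$, whose coefficient works out to $\frac76$. Cleaner still, I would verify directly that the candidate $T=\delta\ph+\frac76\theta\lrcorner\ph$ satisfies the defining relation, passing between the two displayed expressions in \eqref{torcy} by self-duality $*\ph=\ph$ and the star identities \eqref{star}. I expect the main obstacle to be precisely this numerical step: pinning down the coefficient $\frac76$ and the $\Lambda^3_8/\Lambda^3_{48}$ split is where the dimension-eight structure constants \eqref{iden} --- in particular the factor $42$ relating $g$ to $\ph\!\cdot\!\ph$ via \eqref{g2li} --- enter, and where a numerical slip is easiest.
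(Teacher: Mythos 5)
Your proposal is correct in outline, but note that the paper itself contains no proof of this statement: it quotes the theorem from \cite[Theorem~1]{I} (mentioning \cite{Fr,Mer} for alternative proofs) and only runs the computation in the \emph{reverse} direction, extracting \eqref{dphi}, \eqref{tit}, \eqref{327} and \eqref{torcy3} from the assumed identity $\sb\ph=0$. What you sketch is essentially the standard intrinsic-torsion argument (closest in spirit to Friedrich's proof): writing $\sb=\LC+\frac12T$ turns $\sb\ph=0$ into the pointwise linear equation $2\LC_X\ph=D(X\lrcorner\, T)$, and since $\ker D=\Lambda^2_{21}\cong spin(7)$ by Proposition~\ref{spin7}, everything reduces to the invertibility of the $Spin(7)$-equivariant map $T\mapsto\bigl(X\mapsto \pi_7(X\lrcorner\, T)\bigr)$ between the multiplicity-free modules $\Lambda^3\cong\mathbf{8}\oplus\mathbf{48}$ and $T^*M\otimes\Lambda^2_7\cong\mathbf{8}\oplus\mathbf{48}$; Schur's lemma plus two nonvanishing checks then gives existence and uniqueness simultaneously. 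This buys a conceptual proof where the original reference works harder with explicit contractions, and your final normalization step is exactly corroborated by the paper's own identities: contracting the torsion equation with $\ph$ via \eqref{iden} produces \eqref{tit} and the split \eqref{torcy3}, i.e.\ $T=(\delta\ph)^3_{48}+\frac16\theta\lrcorner\ph=\delta\ph+\frac76\theta\lrcorner\ph$, confirming the coefficient $\frac76$.

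Two points need repair before this is a complete proof. First, the phrase ``being tangent to the $Spin(7)$-orbit of $\ph$'' is wrong as stated: $Spin(7)$ \emph{fixes} $\ph$, so that orbit is a point. What you actually need is that $\LC_X\ph$ lies in $so(8)\cdot\ph\cong\Lambda^4_7=D(\Lambda^2_7)$ for every $X$; this follows by comparing $\LC$ with any metric connection adapted to the $Spin(7)$-reduction (such connections exist because the structure group reduces), so that $\LC_X\ph=A(X)\cdot\ph$ with $A\in T^*M\otimes so(8)$. Without this, surjectivity of your map does not yet yield existence. Second, the two Schur scalars (nonvanishing on $\Lambda^3_8$ and on $\Lambda^3_{48}$) and the coefficient $\frac76$ are precisely the content-carrying computations and are left as plans; they are routine with \eqref{iden} but must be done, since vanishing of either scalar would destroy existence or uniqueness. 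Finally, beware that the last term in the paper's printed definition of $D$ reads $\alpha_{sl}\ph_{ijks}$ rather than the $\alpha_{ls}\ph_{ijks}$ your derivation produces; only the derivation action (your sign) makes Proposition~\ref{spin7} and your identification $2\LC_X\ph=D(X\lrcorner\, T)$ consistent.
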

Note that we use here $\ph:=-\ph$ in \cite{I}.  

See also \cite{Fr,Mer} for subsequent proofs of this theorem.

\subsection{The torsion and the Ricci tensor}
Express the codifferential of a 4-form in terms of the Levi-Civita connection and then in terms of the torsion connection using \eqref{tsym} and $\sb\Phi=0$ to get
\begin{equation}\label{dphi}
\begin{split}
\delta\Phi_{klm}=-\sb^g_j\Phi_{jklm}=-\sb_j\Phi_{jklm}+\frac12T_{jsk}\Phi_{jslm}-\frac12T_{jsl}\Phi_{jskm}+\frac12T_{jsm}\Phi_{jskl}\\
=\frac12T_{jsk}\Phi_{jslm}-\frac12T_{jsl}\Phi_{jskm}+\frac12T_{jsm}\Phi_{jskl}.
\end{split}
\end{equation}
%since $\sb\Phi=0$.% and $\sb^g=\sb-\frac12T$.
Substitute \eqref{dphi} into \eqref{torcy} to obtain the following expression for the 3-form torsion $T$,
\begin{equation}\label{torcy2}
T_{klm}=\frac12T_{jsk}\Phi_{jslm}-\frac12T_{jsl}\Phi_{jskm}+\frac12T_{jsm}\Phi_{jskl}+\frac76\theta_s\Phi_{sklm}.
\end{equation}
Applying \eqref{iden},  it is straightforward to check from \eqref{g2li} and \eqref{torcy2} that the Lee form $\theta$ can be expressed in terms of the torsion $T$ and  the 4-form $\Phi$ as follows
\begin{equation}\label{tit}
\begin{split}
\theta_i=-\frac17T_{jkl}\Phi_{jkli}.
\end{split}
\end{equation}
Let $d^{\sb}\theta(X,Y)=(\sb_X\theta)Y-(\sb_Y\theta)$ be the skew-symmetric part of $\sb\theta$. We have
\begin{prop}\label{propDT}
On a $Spin(7)$ manifold the following formulas hold true
\begin{equation}\label{deltaT}
\theta\lrcorner\delta\ph=\theta\lrcorner T,\qquad \delta T=\frac76(d\theta\lrcorner\ph-\theta\lrcorner T)=\frac76\Big(d^{\sb}\theta\lrcorner \ph+(\theta\lrcorner T)\lrcorner\ph-\theta\lrcorner T\Big).
\end{equation}
\end{prop}
\begin{proof}
The first formula  follows directly from \eqref{torcy}. 

We get from \eqref{torcy} using \eqref{star}  that
\begin{equation}\label{deltaT1}
\begin{split}
\delta T=-*d*(-*d\ph+\frac76*(\theta\wedge\ph))=\frac76*(d\theta\wedge\ph-\theta\wedge d\ph)=\frac76(d\theta\lrcorner\ph+\theta\lrcorner *d*\ph)\\
=\frac76(d\theta\lrcorner\ph-\theta\lrcorner\delta\ph)=\frac76(d\theta\lrcorner\ph-\theta\lrcorner T),
\end{split}
\end{equation}
where we applied the already established first formula in \eqref{deltaT} to achieve the last equality.

The equality \eqref{tsym} yields
\begin{equation}\label{nth}
\LC\theta=\sb\theta+\frac12\theta\lrcorner T,\qquad d\theta=d^{\sb}\theta+\theta\lrcorner T.
\end{equation}
Substitute \eqref{nth} into \eqref{deltaT1} to obtain the third identity in \eqref{deltaT}
\end{proof}

\subsection{The Ricci tensor of the torsion connection}
The Ricci tensor $Ric$ and the scalar curvature $Scal$ of the torsion connection were calculated in \cite{I1} with the help of the properties of the $\sb$-parallel real spinor corresponding to the $Spin(7)$ form $\ph$, applying the Schr\"odinger-Lichnerowicz formula for the torsion connection, established in \cite{I1}. Here we calculate the Ricci tensor and its scalar curvature directly to make the paper more self-contained.  We have
\begin{thrm}\cite{I1}
The Ricci tensor and the scalar curvature  of the $Spin(7)$-torsion connection are given by
\begin{equation}\label{ricg2}
\begin{split}
Ric_{ij}=-\frac1{12}dT_{iabc}\ph_{jabc}-\frac76\sb_i\theta_j;\qquad
Scal=\frac72\delta\theta+\frac{49}{18}||\theta||^2-\frac13||T||^2.
\end{split}
\end{equation}
The Riemannian scalar curvature $Scal^g$ of a $Spin(7)$--manifold has the expression
\begin{equation}\label{scal1}
\begin{split}
Scal^g=\frac72\delta\theta+\frac{49}{18}||\theta||^2-\frac1{12}||T||^2.
\end{split}
\end{equation}
\end{thrm}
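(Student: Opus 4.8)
The starting point is that the connection $\sb$ preserves the $Spin(7)$--structure, $\sb\ph=0$. Applying the Ricci identity for $\sb$ to the parallel $4$--form $\ph$ and using $\sb\ph=0$ on both sides, every derivative term drops out and one is left with the purely algebraic condition that the curvature endomorphism $R(e_i,e_j)$ annihilates $\ph$, i.e. $R_{ijks}\ph_{slmn}+R_{ijls}\ph_{ksmn}+R_{ijms}\ph_{klsn}+R_{ijns}\ph_{klms}=0$ for all $i,j$. In the language of Proposition~\ref{spin7} this says precisely that the $2$--form $R_{ij\,\cdot\,\cdot}$ lies in $\ker D\cong\Lambda^2_{21}\cong spin(7)$, which by the algebraic description of $\Lambda^2_{21}$ around \eqref{dec2} is equivalent to the contraction identity $R_{ijks}\ph_{mnks}=2R_{ijmn}$. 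This single relation is the engine of the whole computation.

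To obtain the Ricci tensor I would trace this relation on the first and third slots, $Ric_{jn}=R_{ijin}=\tfrac12 R_{ijks}\ph_{inks}$, so that $Ric$ is expressed through a triple contraction of the full curvature with $\ph$. Because $\ph_{inks}$ is totally skew in $i,k,s$, only the part of $R_{ijks}$ antisymmetric in the three contracted indices survives, and this antisymmetric part is exactly what the first Bianchi identity \eqref{1bi1} controls, replacing the cyclic sum of $R$ by $-\tfrac12 dT$ together with a $\sb T$ term. The $dT$ contribution, re-contracted with $\ph$, produces the term $-\tfrac1{12}dT_{iabc}\ph_{jabc}$. The $\sb T$ contribution is converted into a derivative of the Lee form: differentiating the pointwise identity $\theta_i=-\tfrac17 T_{jkl}\ph_{jkli}$ of \eqref{tit} and using $\sb\ph=0$ gives $\sb_m\theta_i=-\tfrac17(\sb_mT)_{jkl}\ph_{jkli}$, which feeds back the term $-\tfrac76\sb_i\theta_j$. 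Collecting the two pieces yields the asserted formula for $Ric_{ij}$; the real work is the book-keeping of the quartic identities \eqref{iden} so that the coefficients $\tfrac1{12}$ and $\tfrac76$ come out correctly.

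For the scalar curvature I would take the metric trace of the Ricci formula. The Lee term contributes $-\tfrac76\,\sb_i\theta_i=\tfrac76\,\delta\theta$, since the torsion term in $\sb=\LC+\tfrac12 T$ is annihilated by the skew symmetry of $T$ and $\LC_i\theta_i=-\delta\theta$. The remaining term $-\tfrac1{12}dT_{iabc}\ph_{iabc}$ I would expand through the general formula \eqref{dh} for $dT$: the three $\sb T$ terms each collapse to a multiple of $\delta\theta$ via \eqref{tit}, while the $\sigma^T$ term, being quadratic in $T$, contracts against $\ph$ by the identities \eqref{iden} into the norms $||T||^2$ and $||\theta||^2$. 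Assembling these gives $Scal=\tfrac72\delta\theta+\tfrac{49}{18}||\theta||^2-\tfrac13||T||^2$, and the alternative form follows by substituting the splitting $||T||^2=||(\delta\ph)^3_{48}||^2+\tfrac76||\theta||^2$ from \eqref{torcy3}. Finally $Scal^g$ is not an independent computation: the universal relation $Scal^g=Scal+\tfrac14||T||^2$ from \eqref{rics} turns $-\tfrac13||T||^2$ into $-\tfrac1{12}||T||^2$, and \eqref{torcy3} again produces the second expression in \eqref{scal1}.

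The main obstacle is the middle step: correctly isolating the totally skew part of $R_{ijks}$ under the triple $\ph$--contraction and matching it to the right-hand side of \eqref{1bi1}, and likewise evaluating the $\sigma^T$--contraction in the scalar case. Neither step is conceptually deep, but both demand a careful and disciplined use of the identities \eqref{iden}; a single misplaced factor there spoils the characteristic coefficients $-\tfrac1{12}$, $-\tfrac76$, $\tfrac72$, $\tfrac{49}{18}$ that make these formulas useful in the later theorems.
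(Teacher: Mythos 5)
Your proposal is correct and follows essentially the same route as the paper's own proof: $\sb\ph=0$ gives the $spin(7)$-curvature identity \eqref{rr}, the triple $\ph$-contraction of $R$ is reduced through the Bianchi identity \eqref{1bi1} to a $dT$-term plus a $\sb T$-term, \eqref{tit} turns the latter into $-\frac76\sb_i\theta_j$, and the scalar curvatures follow by tracing, expanding $dT$ via \eqref{dh}, and invoking \eqref{rics} and \eqref{torcy3}. The only point to sharpen is that the contraction $\sigma^T_{jabc}\ph_{jabc}$ is not evaluated by \eqref{iden} alone: as in \eqref{ng4}, one must first substitute the $Spin(7)$-specific torsion identity \eqref{torcy2} for one factor of $T$ before \eqref{iden} reduces it to $2||T||^2-\frac{49}{3}||\theta||^2$.
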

\begin{proof}
 Since $\sb\ph=0$, the curvature $R$ of the $Spin(7)$ torsion connection lies in the Lie algebra $spin(7)$,  %it satisfies
\begin{equation}\label{rr}
\begin{split}
R(X,Y,e_i,e_j)\ph(e_i,e_j,Z,V)=2R(X,Y,Z,V),\qquad R_{ijab}\ph_{abkl}=2R_{ijkl}.
\end{split}
\end{equation}
We have from \eqref{rr} using \eqref{1bi1}, \eqref{tit} and \eqref{dh}  that the Ricci tensor $Ric$ of  $\sb$ is given by
\begin{multline}\label{ricdt}
2Ric_{ij}=-R_{iabc}\ph_{jabc}=-\frac13\Big(R_{iabc}+R_{ibca}+R_{icab} \Big)\ph_{jabc}=-\frac16dT_{iabc}\ph_{jabc}-\frac13\sb_iT_{abc}\ph_{jabc}\\=-\frac16dT_{iabc}\ph_{jabc}-\frac73\sb_i\theta_j,
\end{multline}
which completes the proof of the first identity in \eqref{ricg2}.

We calculate  from \eqref{torcy2} using \eqref{iden} that
\begin{equation}\label{ng4}
\begin{split}
\sigma^T_{jabc}\ph_{jabc}=3T_{jas}T_{bcs}\ph_{jabc}=2||T||^2-\frac{49}3||\theta||^2.
\end{split}
\end{equation}
We get from \eqref{dh}, applying \eqref{tit} and  \eqref{ng4} that %and \eqref{torcy3},
\begin{equation}\label{g22}
\begin{split}
dT_{jabc}\ph_{jabc}=4\sb_jT_{abc}\ph_{jabc}+2\sigma^T_{jabc}\ph_{jabc}=-28\delta\theta+4||T||^2-\frac{98}3||\theta||^2.
\end{split}
\end{equation}
Take the trace in the first identity in \eqref{ricg2},  substitute  \eqref{g22} into the obtained equality %and use  \eqref{torcy3} 
to get the second identity in \eqref{ricg2}. The equality  \eqref{scal1} follows from \eqref{rics} and  the second identity in \eqref{ricg2}.% and \eqref{torcy3}.
\end{proof}
\begin{rmrk}
For the $\Lambda^3_{48}$ component $(\delta\ph)^3_{48}$ of $\delta\ph$ we get  using  \eqref{dphi}, \eqref{iden} and \eqref{tit} that
\begin{equation*}%\label{327}
(\delta\ph)^3_{48}=\delta\ph+\theta\lrcorner\ph
\end{equation*}
which combined with 
%The equalities \eqref{327} and 
\eqref{torcy} yields the next expressions  for  $T$ and its norm $||T||^2$ in terms of $(\delta\ph)^3_{48}$,
\begin{equation}\label{torcy3}
\begin{split}
T=(\delta\ph)^3_{48}+\frac16\theta\lrcorner\ph, 
\qquad ||T||^2=||(\delta\ph)^3_{48}||^2+\frac{7}6||\theta||^2.
\end{split}
\end{equation}
A substitution of \eqref{torcy3} into \eqref{ricg2}, \eqref{scal1}, \eqref{ng4} and \eqref{g22} gives  expressions of these formulas in terms of $(\delta\ph)^3_{48}$.
\end{rmrk}
As a consequence of \eqref{ricg2}, we get  the  result, first established by Bonan \cite{Bo} for  parallel $Spin(7)$--spaces,
\begin{cor}\cite{Bo}\label{ricB}
If the curvature of the $Spin(7)$-torsion connection  satisfies the Riemannian first Bianchi identity then its Ricci tensor vanishes.
\end{cor}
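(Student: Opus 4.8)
The plan is to read the conclusion straight off the Ricci formula derived in the proof above. Indeed, \eqref{ricdt} gives
\[
2Ric_{ij}=-R_{iabc}\Phi_{jabc}=-\tfrac13\big(R_{iabc}+R_{ibca}+R_{icab}\big)\Phi_{jabc},
\]
the second equality holding because $\Phi_{jabc}$ is totally skew in $a,b,c$, so each cyclic term contracts with $\Phi$ to the same quantity. Hence it suffices to prove that the cyclic combination $R_{iabc}+R_{ibca}+R_{icab}$ --- the $\sb$-first-Bianchi sum with the \emph{first} index held fixed --- vanishes once the curvature obeys the Riemannian first Bianchi identity \eqref{RB}.

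First I would use the algebraic fact recorded after \eqref{RB}: combined with the pair antisymmetries \eqref{r1}, the identity \eqref{RB} forces the pair-interchange symmetry \eqref{r4}, $R_{iabc}=R_{bcia}$. Applying \eqref{r4} to each summand rewrites the cyclic sum as $R_{bcia}+R_{caib}+R_{abic}$, and the last-pair antisymmetry from \eqref{r1} turns this into $-\big(R_{bcai}+R_{cabi}+R_{abci}\big)$. The three terms in this bracket are precisely the cyclic permutations of $(a,b,c)$ with the fourth slot fixed at $i$, i.e. an instance of the left-hand side of \eqref{RB}, and therefore vanish. Thus $R_{iabc}+R_{ibca}+R_{icab}=0$, and the displayed formula yields $Ric_{ij}=0$.

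The only nonroutine point is the sign-and-position bookkeeping needed to pass from the ``first index fixed'' cyclic sum appearing in \eqref{ricdt} to the ``last index fixed'' sum that \eqref{RB} controls; this is exactly what the combination of \eqref{r4} and the last-pair antisymmetry achieves, and everything else is the already-established identity \eqref{ricdt}. As an alternative one could instead substitute \eqref{1bi1} into \eqref{ricdt} and observe that under \eqref{RB} the left-hand side of \eqref{1bi1} is the vanishing cyclic sum; the purely algebraic route above is preferable since it avoids reintroducing $dT$ and $\sb T$ into the computation.
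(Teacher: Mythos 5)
Your proposal is correct and takes essentially the same route as the paper: Corollary~\ref{ricB} is meant to be read off from \eqref{ricdt}, exactly as you do, since \eqref{RB} together with \eqref{r1} forces the pair symmetry \eqref{r4}, which turns the first-index-fixed cyclic sum $R_{iabc}+R_{ibca}+R_{icab}$ into an instance of \eqref{RB} and so annihilates it. The sign-and-position bookkeeping you spell out is precisely what is implicit in the paper's remark that the corollary follows ``from the proof'' of the Ricci formula \eqref{ricg2}.
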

\begin{cor}\label{thconj2}
Let $(M,\ph)$ be a  balanced $Spin(7)$--manifold, $\theta=0$. Then any one of the following three conditions, $dT=0$, $Scal^g=0$, $Scal=0$ imply that $(M,\ph)$  is parallel, $\LC\ph=0$.
\end{cor}
\begin{proof} The conclusions of the corollary follow from \eqref{ricg2}, \eqref{scal1} and \eqref{g22}.
\end{proof}
%\begin{cor}
%Any balanced  $Spin(7)$--manifold with vanishing   Ricci tensor of the $Spin(7)$-torsion connection, $\theta=Ric=0$, is parallel, $\LC\ph=0$.
%\end{cor}
\begin{thrm}\label{mainsu1}
Let $(M,\ph)$ be a $Spin(7)$--manifold. The Ricci tensor of the $Spin(7)$-torsion connection is symmetric 
 if and only if the 2-form
$d^{\sb}\theta$ 
 is given by 
\begin{equation}\label{new}
%d^{\sb}\theta_{ij}=-\frac13\theta_s\delta\ph_{sij}+\frac16\theta_s\delta\ph_{sab}\ph_{abij}=-\frac13\theta_sT_{sij}+\frac16\theta_sT_{sab}\ph_{abij}=-\frac16d^{\sb}\theta_{ab}\ph_{abij}.
3d^{\sb}\theta=-\theta\lrcorner T+(\theta\lrcorner T)\lrcorner\ph=-\theta\lrcorner \delta\ph+(\theta\lrcorner\delta\ph)\lrcorner\ph =-d^{\sb}\theta\lrcorner\ph.
\end{equation}
In particular, $d^{\sb}\theta$ belongs to $\Lambda^2_7$.
\end{thrm}
\begin{proof}
The Ricci tensor of $\sb$ is symmetric exactly when $\delta T=0$ by \eqref{rics}. 
The second equality of \eqref{deltaT} shows that $\delta T=0$ if and only if
\begin{equation}\label{new1}
d^{\sb}\theta_{st}\ph_{stlm}=2\theta_kT_{klm}-\theta_pT_{pst}\ph_{stlm},
\end{equation}
which multiplied by $\ph_{lmab}$  yields, using \eqref{iden},
\begin{equation}\label{the2}
\begin{split}
-4d^{\sb}\theta_{st}\ph_{stab}+12d^{\sb}\theta_{ab}=2\theta_kT_{klm}\ph_{lmab}+4\theta_pT_{pst}\ph_{stab}-12\theta_pT_{pab}.
\end{split}
\end{equation}
Apply  \eqref{new1} to \eqref{the2} to obtain \eqref{new}.
\end{proof}
On a locally conformally parallel $Spin(7)$--manifold we have $d\ph=\theta\wedge\ph$ and  \eqref{torcy} reads $T=\frac16*d\ph$, which yields $\delta T=0$ and the Ricci tensor of the $Spin(7)$--torsion connection of a locally conformally parallel $Spin(7)$--manifold is symmetric due to \eqref{rics}.
The structure of compact locally conformally parallel $Spin(7)$--manifolds is described in \cite{IPP}.

If $d\theta=0=\delta T$ then  \eqref{deltaT} and \eqref{nth} yield the following
\begin{cor}Let $(M,\ph)$ be a $Spin(7)$--manifold with symmetric Ricci tensor of the $Spin(7)$--torsion connection and closed Lee form. Then 
$
 \theta\lrcorner T=\theta\lrcorner\delta\ph=0$ and $\sb\theta=\LC\theta.
 $
\end{cor}
More precisely, we have 
\begin{prop}\label{maincor}
Let $(M,\ph)$ be a $Spin(7)$--manifold with  symmetric Ricci tensor of the $Spin(7)$--torsion connection.
\hspace{0.1cm}The following three conditions are equivalent:
\begin{itemize}
\item[a)] The covariant derivative of the Lee form $\theta$ with respect to $\sb$ is symmetric.
\item[b)] The 2-form $\theta\lrcorner \delta\ph=\theta\lrcorner T$ belongs to $\Lambda^2_{21}\cong spin(7)$.
\item[c)] The 2-form $d\theta$ belongs to $\Lambda^2_{21}\cong spin(7)$.
\end{itemize}
\end{prop}
\begin{proof} The equivalence of a) and b) follows from \eqref{new}. 

Since $\delta T=0$, we get from \eqref{deltaT1}  $d\theta\lrcorner\ph=\theta\lrcorner T$, which proves the equivalence of b) and c).
\end{proof}
\section{Proof of Theorem~\ref{mainsu3}}
To prove  Theorem~\ref{mainsu3}, we start  with the next 
\begin{lemma}\label{sbtheta}
Let $(M,\ph)$ be a $Spin(7)$--manifold  with $d\theta\in spin(7)$.

If the $Spin(7)$--torsion connection $\sb$  is Ricci-flat and  has curvature $R \in S^2\Lambda^2$, i.e. \eqref{s2l2} holds, 
 then 
 $$\sb\theta=0.$$
In particular, the Lee form is co-closed, $\delta\theta=0$.
\end{lemma}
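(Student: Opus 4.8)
The plan is to reduce the statement to a maximum principle for the function $f=||\theta||^2$. First I would record the consequences of $Ric=0$: since the Ricci tensor is then symmetric, the last identity in \eqref{rics} gives $\delta T=0$, and its trace gives $Scal=0$. As $d\theta\in spin(7)=\Lambda^2_{21}$, I can now invoke Corollary~\ref{maincor} (whose hypothesis, symmetry of $Ric$, holds): condition c) is satisfied, hence so is condition a), i.e. $\sb\theta$ is symmetric, $d^{\sb}\theta=0$, equivalently $d\theta=\theta\lrcorner T$ by \eqref{nth}.

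Next I would bring in the curvature hypothesis $R\in S^2\Lambda^2$. Applied to the Ricci-flat connection $\sb$, Theorem~\ref{s2ric} gives $||T||^2=\mathrm{const}$. Feeding $Scal=0$ into the scalar-curvature formula \eqref{ricg2} then yields the pointwise relation
\[
\delta\theta=\tfrac{2}{21}||T||^2-\tfrac79||\theta||^2 ,
\]
and, $||T||^2$ being constant, this reads $d(\delta\theta)=-\tfrac79\,df$.

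The heart of the proof is to compute the Levi-Civita (trace-Hessian) Laplacian of $f$. From \eqref{nth}, the skew-symmetry of $T$ and the symmetry of $\sb\theta$ one gets $\partial_i f=2(\sb_i\theta_k)\theta_k$ and, after the cross terms cancel, $\LC_i\LC_i f=2||\sb\theta||^2+2\theta_k\,\sb_i\sb_i\theta_k$. Here $d^{\sb}\theta=0$ allows $\sb_i\sb_i\theta_k=\sb_i\sb_k\theta_i$, and commuting the covariant derivatives of the torsion connection produces a Ricci contraction, which vanishes as $Ric=0$, plus torsion terms $T_{ikd}(\sb_d\theta_i)$ that vanish because $\sb\theta$ is symmetric and $T$ is a $3$-form. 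Hence $\sb_i\sb_i\theta_k=-\partial_k(\delta\theta)$, and substituting $d(\delta\theta)=-\tfrac79\,df$ gives
\[
\LC_i\LC_i f-\tfrac{14}{9}\langle df,\theta\rangle=2||\sb\theta||^2\ge 0 .
\]

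The left-hand side is a second-order elliptic operator with no zeroth-order term, so $f$ is a subsolution on the closed manifold $M$; the strong maximum principle then forces $f=||\theta||^2$ to be constant, and therefore $||\sb\theta||^2\equiv 0$, i.e. $\sb\theta=0$. Taking the trace gives $\delta\theta=-\mathrm{tr}\,\sb\theta=0$, so the structure is Gauduchon. The step I expect to demand the most care is the commutation of covariant derivatives for the connection with torsion: I must check that the two a priori different Ricci traces of $R$ indeed collapse under $Ric=0$ and that every torsion correction is killed by the symmetry of $\sb\theta$ — this is precisely where the hypotheses $Ric=0$ and $d^{\sb}\theta=0$ enter.
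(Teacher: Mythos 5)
Your reduction steps are all sound and in fact follow the same scaffolding as the paper's own proof: $Ric=0$ gives $\delta T=0$ and $Scal=0$ via \eqref{rics}; Corollary~\ref{maincor} converts $d\theta\in spin(7)$ into the symmetry \eqref{symthet} of $\sb\theta$; Theorem~\ref{s2ric} gives $||T||=const$; and the scalar curvature formula \eqref{ricg2} then yields exactly the paper's \eqref{delth}, so indeed $d(\delta\theta)=-\frac79\,d||\theta||^2$. Your Bochner computation also checks out: the torsion correction in the Ricci identity is killed by the symmetry of $\sb\theta$ against the skew-symmetry of $T$, the curvature contraction vanishes by $Ric=0$, and $\LC_i\LC_i=\sb_i\sb_i$ on functions since the trace of a $3$-form torsion vanishes, giving $\LC_i\LC_i f-\frac{14}9\langle df,\theta\rangle=2||\sb\theta||^2\ge0$ for $f=||\theta||^2$.

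The genuine gap is the final step. The Lemma is stated for an arbitrary $Spin(7)$--manifold with $d\theta\in spin(7)$ --- there is no compactness hypothesis --- and the paper's proof is entirely pointwise; your appeal to the strong maximum principle on a ``closed manifold'' is simply unavailable in that generality (on a noncompact $M$ the subsolution $f$ need not attain a maximum). The idea your proposal is missing is that $d||\theta||^2=0$ can be obtained \emph{algebraically}: once $Ric=0$ and $\sb T\in\Lambda^4$ (the latter equivalent to $R\in S^2\Lambda^2$ by \eqref{4form}), one gets the explicit quadratic expression \eqref{li} for $\sb_i\theta_j$ in terms of $T$ and $\ph$; substituting the structure equation \eqref{torcy2} yields \eqref{part}, and contracting with $\theta_p$ makes the right-hand side vanish identically by \eqref{symthet} and \eqref{tth} --- this is \eqref{part1}. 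With $||T||$ and $||\theta||$ both constant, $\delta\theta$ is constant by \eqref{delth}, and then the very same Ricci-identity computation you perform collapses to $0=\frac12\sb_i\sb_i||\theta||^2=||\sb\theta||^2$ pointwise, with no integration or maximum principle. In the compact setting --- which is where the Lemma is actually applied in Theorem~\ref{mainsu3}, and where the paper itself runs a maximum-principle argument of exactly your type in the proof of Theorem~\ref{closTt}, cf.\ \eqref{fmax} --- your argument is a valid alternative; but as a proof of the Lemma as stated it establishes strictly less than what is claimed.
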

\begin{proof}
The condition $R\in S^2\Lambda^2$ is equivalent to $\sb T$ to be a 4-form because of \eqref{4form}. 
Substitute \eqref{dh} into \eqref{ricg2} to get using \eqref{tit} that
\begin{equation}\label{su1}
0=Ric_{ij}+\frac76\sb_i\theta_j+\frac1{12}(4\sb_iT_{abc}+2\sigma^T_{iabc})\ph_{jabc}=Ric_{ij}+\frac72\sb_i\theta_j+\frac1{6}\sigma^T_{iabc}\ph_{jabc}.
\end{equation}
Let $Ric=0$. Then we have from \eqref{tit}, \eqref{su1}, \eqref{ricg2} and \eqref{sigma}
\begin{equation}\label{li}
\begin{split}
\sb_i\theta_j=\frac17\sb_iT_{abc}\ph_{jabc}=-\frac1{12}dT_{iabc}\ph_{jabc}=-\frac1{21}\sigma^T_{iabc}\ph_{jabc}\\
=-\frac1{21}\Big(T_{abs}T_{sci}+T_{bcs}T_{sai}+T_{cas}T_{sbi}\Big)\ph_{abcj}=-\frac17T_{abs}T_{cis}\ph_{abcj}.
\end{split}
\end{equation}
We calculate from \eqref{li} using \eqref{torcy2}
\begin{equation}\label{part}
\begin{split}
-7\sb_p\theta_k=T_{jsl}T_{lmp}\ph_{jsmk}
=T_{klm}T_{lmp}-\frac12T_{jsk}\ph_{jslm}T_{lmp}-\frac76\theta_a\ph_{aklm}T_{lmp}.
\end{split}
\end{equation}
Multiply \eqref{part} with $\theta_p$ and apply a) and b) of Proposition~\ref{maincor} 
to get
\begin{equation}\label{part1}
\begin{split}
-\frac72\sb_k||\theta||^2=-7\theta_p\sb_k\theta_p=-7\theta_p\sb_p\theta_k=\Big(T_{klm}T_{lmp}-\frac12T_{jsk}\ph_{jslm}T_{lmp}-\frac76\theta_a\ph_{aklm}T_{lmp}\Big)\theta_p=0.
\end{split}
\end{equation}
Since $Scal=0$, the second identity in \eqref{ricg2} yields
\begin{equation}\label{delth}
\delta\theta=-\frac79||\theta||^2+\frac2{21}||T||^2.
\end{equation}
The condition \eqref{s2l2} together with Theorem~\ref{s2ric} tells us  that the norm of the torsion is constant.
The norm of the Lee form $\theta$ is also a  constant due to \eqref{part1}. Now, \eqref{delth}  shows that $\delta\theta$ is a constant, 
\begin{equation}\label{delc}
\sb_k\delta\theta=-\sb_k\sb_i\theta_i=0.
\end{equation}
Using \eqref{part1}, $d^{\sb}\theta=0$, 
 \eqref{delc}  and the Ricci identity for the torsion connection $\sb$, we have the   equalities
\begin{multline*}
0=\frac12\sb_i\sb_i||\theta||^2=\theta_j\sb_i\sb_i\theta_j+||\sb\theta||^2=\theta_j\sb_i\sb_j\theta_i+||\sb\theta||^2\\=\theta_j\sb_j\sb_i\theta_i
-R_{ijis}\theta_s\theta_j-\theta_jT_{ijs}\sb_s\theta_i+||\sb\theta||^2=Ric_{js}\theta_j\theta_s+||\sb\theta||^2=||\sb\theta||^2,
\end{multline*}
since $Ric=0$. % and $\sb\theta$ is symmetric. 
The proof of the Lemma is completed due to the equality $\LC_i\theta_i=\sb_i\theta_i+\frac12\theta_sT_{sii}=\sb_i\theta_i$, which is a consequence of  \eqref{tsym}.
\end{proof}
In view of \eqref{nth} and Lemma~\ref{sbtheta} we derive
\begin{cor}\label{sbthetad}
Let $(M,\ph)$ be a $Spin(7)$--manifold with closed Lee form, $d\theta=0$. 
If the  $Spin(7)$--torsion  connection $\sb$  is Ricci-flat and  has curvature $R \in S^2\Lambda^2,$ i.e. \eqref{s2l2} holds,  
 then the Lee form $\theta$ is $\sb$-parallel and $\LC$-parallel,   $\sb\theta=\LC\theta=0$.
\end{cor}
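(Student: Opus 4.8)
The plan is to read off the conclusion almost immediately from Lemma~\ref{sbtheta}, and then to transport the resulting $\sb$-parallelism of $\theta$ across to $\LC$-parallelism by controlling the single torsion term that separates the two connections. Thus the argument splits into a ``$\sb\theta=0$'' part, which is essentially already done, and a short ``$\sb\theta=0\Rightarrow\LC\theta=0$'' part.

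For the first part I would note that the closedness hypothesis $d\theta=0$ places $d\theta$ trivially inside $spin(7)\cong\Lambda^2_{21}$, since the zero $2$-form lies in every $Spin(7)$-irreducible summand; equivalently, since here $Ric=0$ is in particular symmetric, Corollary~\ref{maincor} gives the equivalence of the condition $d\theta\in spin(7)$ with the symmetry of $\sb\theta$, both of which therefore hold. Hence the standing hypothesis of Lemma~\ref{sbtheta} is satisfied, and since by assumption the $Spin(7)$-torsion connection is Ricci flat with $R\in S^2\Lambda^2$, i.e. \eqref{s2l2} holds, that lemma applies verbatim to yield $\sb\theta=0$, together with $\delta\theta=0$ and the Gauduchon property.

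For the second part I would invoke the relation $\LC\theta=\sb\theta+\tfrac12\,\theta\lrcorner T$ from \eqref{nth}, which reduces the claim $\LC\theta=0$ to the vanishing of $\theta\lrcorner T$. Because $Ric=0$ is symmetric and $\theta$ is closed, the corollary producing \eqref{thT} applies and gives exactly $\theta\lrcorner T=\theta\lrcorner\delta\ph=0$ as well as $\sb\theta=\LC\theta$; combined with $\sb\theta=0$ this forces $\LC\theta=0$. Finally, a $\LC$-parallel $1$-form is automatically closed and co-closed, hence harmonic, so $d\theta=\delta\theta=0$, which also re-confirms the Gauduchon property. I do not anticipate any genuine obstacle here: the only analytic input, namely the Bochner-type vanishing that produces $\sb\theta=0$, has already been isolated in Lemma~\ref{sbtheta}, and what remains is the purely algebraic bookkeeping of the torsion term $\theta\lrcorner T$, whose vanishing is guaranteed by the closedness of the Lee form together with the symmetry of the Ricci tensor.
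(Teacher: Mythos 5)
Your proof is correct and follows essentially the same route as the paper, which derives the corollary by noting that $d\theta=0$ trivially gives $d\theta\in spin(7)$ so that Lemma~\ref{sbtheta} yields $\sb\theta=0$, and then uses the corollary producing \eqref{thT} (i.e.\ $\delta T=0$ and $d\theta=0$ force $\theta\lrcorner T=0$, hence $\sb\theta=\LC\theta$ by \eqref{nth}) to upgrade this to $\LC\theta=0$ and harmonicity. There are no gaps; your handling of the torsion term $\theta\lrcorner T$ is exactly the paper's mechanism.
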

To finish the proof of Theorem~\ref{mainsu3}, since $R\in S^2\Lambda^2$ and $Ric=0$, we observe  from \eqref{li}, \eqref{tit}, \eqref{ricg2}  and Lemma~\ref{sbtheta} the validity of the following  identities
\begin{equation}\label{nthh}
\begin{split}
\sb_pT_{jkl}\Phi_{jkli}=7\sb_p\theta_i=0;\\
\sigma^T_{pjkl}\Phi_{jkli}=-21\sb_p\theta_i=0;\\
dT_{pjkl}\ph_{jkli}=-14\sb_p\theta_i=0.
\end{split}
\end{equation}
The identities \eqref{nthh}  show that the 4-forms $(\sb T)\in \Lambda^4_{27}$, $(\sigma^T)\in \Lambda^4_{27}$ and $dT\in\Lambda^4_{27}$.

In particular, the 4-forms $\sb T,\sigma^T$ and $dT$ are self-dual due to  Proposition~\ref{427}. Hence, we have
\begin{equation}\label{dtt}
\delta dT=-*d*dT=-*d^2T=0.
\end{equation}
If $M$ is compact,  take the integral scalar product with $T$ and use \eqref{dtt} to get
\[0=<\delta dT,T>=\int_M|dT|^2 vol.
\]
Hence, $dT=0$ and \eqref{4form} yields $0=dT=4\LC T$.
 
 The converse follows from the next
 \begin{lemma}\label{lnew}
 Let $(M,\Phi)$ be a $Spin(7)$--manifold with $\LC T=0$. Then the $Spin(7)$--torsion connection $\sb$ is Ricci flat, has curvature $R\in S^2\Lambda^2$ and  $\sb\theta=0$. In particular, $\delta\theta=0$.
 \end{lemma}
 \begin{proof}
If $\LC T=0$ then $dT=\delta T=0$. Consequently,  $R\in S^2\Lambda^2$ because of \eqref{4form}, the Ricci tensor  is symmetric due to \eqref{rics} and $Ric=-\frac76\sb\theta$  by \eqref{ricg2}. In particular  $d^{\sb}\theta=0$.  

Now, we apply Proposition~\ref{maincor} to  conclude that each of the two forms $\theta\lrcorner T$, $d\theta$ belongs to $ \Lambda^2_{21}$. 

We will prove that $\sb\theta=0$ following the steps in the proof of Lemma~\ref{sbtheta}. First we show $d||\theta||^2=0$.

 The condition $dT=0$ and  \eqref{dtnt} imply $\sb T=-\frac12\sigma^T$ which combined with \eqref{tit}  and  \eqref{torcy2} yelds
\begin{equation}\label{liii}
\begin{split}
14\sb_p\theta_k=2\sb_pT_{jsm}\ph_{kjsm}=-\sigma^T_{pjsm}\ph_{kjsm}=-3T_{jsl}T_{lmp}\ph_{jsmk}\\
=-3(T_{klm}T_{lmp}-\frac12T_{jsk}\ph_{jslm}T_{lmp}-\frac76\theta_a\ph_{aklm}T_{lmp}).
\end{split}
\end{equation}
Multiply \eqref{liii} with $\theta_p$ and use b) of Proposition~\ref{maincor} 
to obtain
\begin{equation}\label{part11}
\begin{split}
7\sb_k||\theta||^2=14\theta_p\sb_k\theta_p=14\theta_p\sb_p\theta_k=-3(T_{klm}T_{lmp}-\frac12T_{jsk}\ph_{jslm}T_{lmp}-\frac76\theta_a\ph_{aklm}T_{lmp})\theta_p=0.
\end{split}
\end{equation}
The identity \eqref{g22} and $dT=0$ yield
\begin{equation}\label{delth1}
\delta\theta=\frac17||T||^2-\frac76||\theta||^2, \qquad \sb\delta\theta=0.
\end{equation} 
The second equality in \eqref{delth1} is a consequence of the first one and the facts that $\sb||T||^2=\sb||\theta||^2=0$.

 Using \eqref{part11}, $d^{\sb}\theta=0$, the second equality in \eqref{delth1}  and the Ricci identity for the torsion connection $\sb$, we have the  sequence of equalities
\begin{multline*}
0=\frac12\sb_i\sb_i||\theta||^2=\theta_j\sb_i\sb_i\theta_j+||\sb\theta||^2=\theta_j\sb_i\sb_j\theta_i+||\sb\theta||^2=\theta_j\sb_j\sb_i\theta_i
-R_{ijis}\theta_s\theta_j-\theta_jT_{ijs}\sb_s\theta_i+||\sb\theta||^2\\=Ric_{js}\theta_j\theta_s+||\sb\theta||^2=-\frac76\theta_j\sb_j\theta_s\theta_s+||\sb\theta||^2=-\frac7{12}\theta_j\sb_j||\theta||^2+||\sb\theta||^2=||\sb\theta||^2,
\end{multline*}
since $Ric=-\frac76\sb\theta$, $\sb\theta$ is symmetric and $||\theta||^2=const$. Hence, $Ric=0$. 
 \end{proof}
 Thus, the proof of Theorem~\ref{mainsu3} is completed.
 \subsection{Proof of  Theorem~\ref{co1}, Corollary~\ref{mainspin} and Corollary~\ref{co2}}
To proof Theorem~\ref{co1} we recall that the Riemannian first Bianchi identity \eqref{RB} for the torsion connection $\sb$ implies \eqref{r4} and the vanishing of its Ricci tensor (cf. Corollary~\ref{ricB}). Hence, \eqref{s2l2} holds true and Theorem~\ref{mainsu3} shows $\LC T=dT=0$. On the other hand, \eqref{RB} is equivalent to the conditions \eqref{FBT} (cf.  \cite[Theorem~1.2]{IS}), which combined with $dT=0$, completes the proof of Theorem~\ref{co1}.

Finally,  the proofs of Corollary~\ref{mainspin} and Corollary~\ref{co2} follow from the proof of Theorem~\ref{mainsu3}, Theorem~\ref{co1}  and Corollary~\ref{sbthetad}.

\section{$Spin(7)$--manifolds with closed torsion }\label{closed}
In this section we describe properties of compact $Spin(7)$--manifold with closed torsion form, proof our main results  and show that these spaces are generalized gradient Ricci solitons.
\subsection{Proof of Theorem~\ref{closT}}
\begin{proof}
The condition $dT=0$ and  the equality \eqref{ricg2} imply \eqref{clos1}.

For the converse, assume \eqref{clos1} holds.  The equality   \eqref{clos1} combined with the first identity in  \eqref{ricg2} yields 
$dT_{jabc}\Phi_{iabc}=0$, which shows $dT\in \Lambda^4_{27}$ and in particular $dT$ is self-dual, due to  Proposition~\ref{427}. Hence $\d dT=0$  implying $dT=0,$ since $M$ is compact.

Further, a combination  of \eqref{clos1} with \eqref{rics} and \eqref{nth} yields 
$
\frac67\delta T=d^{\nabla}\theta=d\theta-\theta\lrcorner T
$ 
which compared  with \eqref{deltaT1} implies $d\theta=d\theta\lrcorner\Phi$.
Hence $d\theta\in\Lambda^2_{21}$ and the proof is completed.
\end{proof}
Note that Theorem~\ref{closT} 
 generalizes \cite[Proposition~5.8]{Wit}.
\begin{cor}\label{cclosT}
Let $(M,\ph)$ be a  $Spin(7)$--manifold with closed torsion 3-form, $dT=0$. 
Then 
\eqref{clos1} holds true,   the  exterior derivative of the Lee form $ d\theta\in \Lambda^2_{21}\cong spin(7)$ and $\delta T=\frac76d^{\sb}\theta$.
\end{cor}
\subsection{Proof of Theorem~\ref{closTt}}
\begin{proof}
The equivalences of a) and f)  as well as  between c) and e) follow from Theorem~\ref{closT}.

The second Bianchi identity for the torsion connection reads \cite[Proposition~3.5]{IS}
\begin{equation}\label{e1}
d(Scal)_j-2\sb_iRic_{ji}+\frac16d||T||^2_j+\delta T_{ab}T_{abj}+\frac16T_{abc}dT_{jabc}=0.
\end{equation}
From Theorem~\ref{closT}  we have taking into account \eqref{rics} and \eqref{nth}
\begin{equation}\label{nnewt}Ric=-\frac76\sb\theta, \quad Scal =\frac76\delta \theta, \quad \delta T=\frac76d^{\sb}\theta=\frac76(d\theta-\theta\lrcorner T).
\end{equation}
Since $dT=0$,  the covariant derivative of the first equation in \eqref{delth1} yields
\begin{equation}\label{tt1}
-\sb_j\delta \theta-\frac73\sb_j\theta_s.\theta_s+\frac17\sb_j||T||^2=0.
\end{equation}
Another covariant derivative of \eqref{tt1} together with \eqref{nnewt} implies 
\begin{equation}\label{tt2}
\Delta\delta\theta-\frac73\sb_j\sb_j\theta_s.\theta_s-\frac{12}7||Ric||^2-\frac17\Delta||T||^2=0,
\end{equation}
where $\Delta$ is the Laplace operator $\Delta=-\LC_i\LC_if=-\sb_i\sb_if$ acting on smooth function $f$ since the torsion of $\sb$ is  a 3-form.

On the other hand,  we obtain using \eqref{nnewt} that
\begin{equation}\label{is1}
\sb_i\sb_j\theta_i=\sb_i(\sb_i\theta_j-\frac67\delta T_{ij})=\sb_i\sb_i\theta_j
-\frac67\sb_i\delta T_{ij}=\sb_i\sb_i\theta_j-\frac37\delta T_{ia}T_{iaj},
\end{equation}
where we used the next identity  for a metric connection with skew torsion shown in \cite[Proposition~3.2]{IS}
\begin{equation}\label{iii}
\sb_i\delta T_{ij}=\frac12\delta T_{ia}T_{iaj}.
\end{equation}
We include  a proof  of \eqref{iii} for completeness.  The identity $\delta^2=0$ together with \eqref{tsym} and \eqref{dtnt} imply 
$$0=\delta^2 T_k=\LC_i\LC_jT_{ijk}=\LC_i\sb_jT_{ijk}=\sb_i\delta T_{ik}+\frac12T_{iks}\delta T_{is}=\sb_i\delta T_{ik}-\frac12\delta T_{is}T_{isk}.$$
The equalities \eqref{is1}, \eqref{e1}  and \eqref{nnewt} yield
\begin{equation}\label{is2}
\sb_j\delta \theta+2\sb_i\sb_i\theta_j+\frac17\sb_j||T||^2=0.
\end{equation}
Substitute the second term in \eqref{is2} into \eqref{tt2} to get
\begin{equation}\label{fmax}
\Delta\Big(\delta\theta-\frac17||T||^2 \Big)+\theta_j\sb_j\Big( \frac76\delta\theta+\frac16||T||^2\Big)=\frac{12}7||Ric||^2\ge 0.
\end{equation}
Assume the condition a). Then \eqref{clos1} %together with $Ric=0$ 
 implies $\delta\theta=0$. Since $M$ is compact  we may apply to \eqref{fmax}  the strong maximum principle (see e.g. \cite{YB,GFS})  to achieve $||T||^2=const.$ Hence, b) follows. 

Suppose b) holds. Since $M$ is compact and $d||T||^2=0$,  the strong maximum principle applied to \eqref{fmax}  implies $\delta\theta=const.=0$ Now \eqref{fmax} shows $Ric$=0. Thus, a) is equivalent to b).

Further, the condition $\delta\theta=0$ implies $d||T||^2=0$ by the strong maximum principle applied to \eqref{fmax}. Hence b) is equivalent to c)

 Finally, to show the equivalence of d) and b), we use \eqref{rics} and \eqref{nnewt} to  write \eqref{fmax} in the form
\begin{equation}\label{fmaxf}
\Delta\Big(\frac67Scal^g-\frac5{14}||T||^2 \Big)+\theta_j\sb_j\Big( Scal^g-\frac1{12}||T||^2\Big)=\frac{12}7||Ric||^2\ge 0.
\end{equation}
The strong maximum principle applied to \eqref{fmaxf} implis d) is equivalent to b) 
which completes
 the proof of Theorem~\ref{closTt}.
\end{proof}
\subsection{Generalized steady Ricci solitons}
It is shown in \cite[Proposition~4.28]{GFS} that a Riemannian manifold $(M,g,T)$ with a closed torsion $dT=0$ is a steady generalized  Ricci soliton if there exists a vector field $X$ and a 2-form $B$ such that it is a solution to the equations
\begin{equation}\label{gein3}
Ric^g=\frac14T^2-\frac12\mathbb{L}_Xg, \qquad \delta T=B,
\end{equation}
for $ B$ satisfying $d(B+X\lrcorner T)=0.$
In particular $\Delta_dT=\mathbb{L}_XT$ where  $\Delta_d=-(d\delta+\delta d)$ is the Hodge Laplacian.

We also  recall  the equivalent formulation \cite[Definition~4.31]{GFS} that a compact Riemannian manifold $(M,g,T)$ with a closed 3-form $T$ is a steady generalized  Ricci soliton with $k=0$ if there exists a vector field $X$ and one has 
\begin{equation}\label{gein2}
Ric^g=\frac14T^2-\frac12\mathbb{L}_Xg, \qquad \delta T=-X\lrcorner T, \qquad dT=0.
\end{equation}
If the vector field $X$ is a gradient of a smooth function $f$ then one has the notion of a steady generalized gradien Ricci soliton.
\begin{prop}\label{grsol}
Let $(M,\Phi)$ be a compact $Spin(7)$--manifold with closed torsion form, $dT=0$. 

Then it is a steady generalized  Ricci soliton with $X=\frac76\theta$ and $B=\frac76(d\theta-\theta\lrcorner T)=\frac76d^{\sb}\theta$.
\end{prop}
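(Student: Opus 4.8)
The plan is to verify directly that the data $X=\frac76\theta$ and $B=\frac76(d\theta-\theta\lrcorner T)$ satisfy the defining equations \eqref{gein3} of a generalized steady Ricci soliton, exploiting that on a $Spin(7)$--manifold with $dT=0$ we have the explicit formula $Ric=-\frac76\sb\theta$ from Theorem~\ref{closT}, together with the relation \eqref{rics} between $Ric^g$ and $Ric$. The statement is about a \emph{smooth} (not necessarily compact) manifold, so I must be careful to use only the pointwise identities that hold without integration; fortunately Corollary~\ref{cclosT} gives the key formula $Ric=-\frac76\sb\theta$ for any strong $Spin(7)$--manifold.

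First I would establish the second equation $\delta T=B$. Using the first relation in \eqref{rics}, namely $Ric^g_{ij}=Ric_{ij}+\frac12(\delta T)_{ij}+\frac14 T_{ik}\cdot T_{jk}$, and the antisymmetric part $Ric_{ij}-Ric_{ji}=-(\delta T)_{ij}$, I would combine this with $Ric=-\frac76\sb\theta$ to compute $\delta T$. Indeed the antisymmetry formula gives $(\delta T)_{ij}=Ric_{ji}-Ric_{ij}=\frac76(\sb_i\theta_j-\sb_j\theta_i)=\frac76 d^{\sb}\theta_{ij}$, and then \eqref{nth}, which reads $d\theta=d^{\sb}\theta+\theta\lrcorner T$, yields $\delta T=\frac76(d\theta-\theta\lrcorner T)=B$. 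This identifies $B$ exactly as claimed and matches \eqref{delta1} already derived in the proof of Theorem~\ref{closT}.

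Next I would verify the soliton equation $Ric^g=\frac14 T^2-\mathbb{L}_X g$. Here $\frac14 T^2$ denotes the symmetric tensor $\frac14 T_{ik}\cdot T_{jk}$ appearing in \eqref{rics}, and $\mathbb{L}_X g$ is the Lie derivative, whose components for $X=\frac76\theta$ are $(\mathbb{L}_X g)_{ij}=\LC_i X_j+\LC_j X_i=\frac76(\LC_i\theta_j+\LC_j\theta_i)$, i.e. twice the symmetric part of $\LC\theta$. Using \eqref{rics} to substitute $Ric^g-\frac14 T^2=Ric+\frac12\delta T$ and then inserting $Ric=-\frac76\sb\theta$ and $\delta T=\frac76 d^{\sb}\theta$, I would check that the symmetric part of $-\frac76\sb\theta+\frac12\cdot\frac76 d^{\sb}\theta$ equals $-\frac76$ times the symmetric part of $\LC\theta$; the antisymmetric contributions from $\sb\theta$ and $\frac12 d^{\sb}\theta$ must cancel, leaving only the symmetric Levi-Civita Hessian, which is precisely $-\mathbb{L}_X g$. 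The relation \eqref{nth}, $\LC\theta=\sb\theta+\frac12\theta\lrcorner T$, is the bridge converting the torsion-connection expressions into Levi-Civita ones, and I expect the skew part $\frac12\theta\lrcorner T$ to account for exactly the $\delta T$ term.

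Finally the closure condition $d(B+X\lrcorner T)=0$ must be checked: here $B+X\lrcorner T=\frac76(d\theta-\theta\lrcorner T)+\frac76\theta\lrcorner T=\frac76 d\theta$, which is closed since $d^2\theta=0$. The main obstacle I anticipate is the bookkeeping in the symmetric/antisymmetric decomposition of the soliton equation, ensuring that the numerical coefficients $\frac76$, $\frac12$, and $\frac14$ combine correctly when passing between $\sb$ and $\LC$; this is a routine but delicate index computation rather than a conceptual difficulty, and the already-established formulas \eqref{rics}, \eqref{nth}, and Corollary~\ref{cclosT} supply all the ingredients needed.
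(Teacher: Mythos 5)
Your proposal follows the same route as the paper's own proof: both verify \eqref{gein3} directly, using the pointwise identity $Ric=-\frac76\sb\theta$ (Corollary~\ref{cclosT}, so compactness is indeed not needed --- your caution on this point is well placed), the antisymmetric part of \eqref{rics} together with \eqref{nth} to obtain $\delta T=\frac76 d^{\sb}\theta=\frac76(d\theta-\theta\lrcorner T)=B$, and the observation that $B+X\lrcorner T=\frac76\,d\theta$ is closed. The derivations of $\delta T=B$ and of the closure condition are exactly the paper's.

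There is, however, one concrete slip at precisely the step you yourself flagged as delicate. With your stated (standard) convention $(\mathbb{L}_Xg)_{ij}=\LC_iX_j+\LC_jX_i$, the computation you outline gives, for $X=\frac76\theta$,
\begin{equation*}
Ric_{ij}+\tfrac12(\delta T)_{ij}=-\tfrac{7}{12}\bigl(\sb_i\theta_j+\sb_j\theta_i\bigr)=-\tfrac12(\mathbb{L}_Xg)_{ij}.
\end{equation*}
The antisymmetric parts do cancel, as you say, but the surviving symmetric part is $-\frac76$ times the \emph{symmetric part} $\tfrac12(\LC_i\theta_j+\LC_j\theta_i)$ of $\LC\theta$, which under your own definition is $-\tfrac12\mathbb{L}_Xg$, not $-\mathbb{L}_Xg$ as you assert; read literally, your verification of the first soliton equation fails by a factor of $2$. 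The resolution is a convention, not new mathematics: the Lie-derivative term in \eqref{gein3} carries an implicit factor $\tfrac12$, as is forced by consistency with the gradient form \eqref{gein1} (whose Hessian term is $\LC_i\LC_jf$, whereas $(\mathbb{L}_{\mathrm{grad}\,f}\,g)_{ij}=2\LC_i\LC_jf$), and as the paper's own proof makes explicit when it rewrites the first equation of \eqref{gein3} as \eqref{acy2}, $Ric_{ij}=-\frac12\delta T_{ij}-\frac12(\sb_iX_j+\sb_jX_i)$, which your computation then satisfies identically. Once this factor is reconciled, your argument coincides with the paper's proof.
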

\begin{proof}
As we identify the vector field $X$ with its corresponding 1-form via the metric, we have using \eqref{tsym}
\begin{equation}\label{acy1}
dX=d^{\sb}X+X\lrcorner T.
\end{equation}
In view of \eqref{rics}, \eqref{tsym} and \eqref{acy1} we write the first equation in \eqref{gein3} in the form
\begin{equation}\label{acy2}
Ric =-\frac12\delta T-\frac12\mathbb{L}_Xg=-\frac12\delta T-\sb X+\frac12d^{\sb}X=-\frac12\delta T-\sb X+\frac12dX-\frac12X\lrcorner T.
\end{equation}
Set $X=\frac76\theta, \quad B=\frac76(d\theta-\theta\lrcorner T)$ and apply Corollary~\ref{cclosT} to conclude that \eqref{acy2} is trivially satisfied and $\d T=B$. Hence, $(M,\Phi)$ is a steady generalized  Ricci soliton since $d(B+\frac76\theta\lrcorner T)=\frac76d^2\theta=0$.
\end{proof}
One fundamental consequence of Perelman's energy formula for Ricci flow is that compact steady solitons for Ricci flow are automatically gradient.  Adapting these energy functionals to generalized Ricci
flow, it is proved in  \cite[Chapter~6]{GFS}  that steady generalized Ricci solitons on compact manifolds are automatically gradient, and moreover satisfy k = 0, i.e. there exists a smooth function $f$ such that $X=grad(f)$ and \eqref{gein2} takes the form
\begin{equation}\label{gein1}
Ric^g=\frac14T^2-(\LC)^2f, \qquad \delta T=-df\lrcorner T, \qquad dT=0.
\end{equation}
The smooth function $f$ is determined with $u=\exp(-\frac12f)$, where $u$ is  the first eigenfunction of the
Schr\"odinger operator, (see \cite[Lemma~6.3, Corollary~6.10, Corollary~6.11]{GFS}),
\begin{equation}\label{schro}-4\Delta_d +Scal^g-\frac1{12}||T||^2=-4\Delta_d +Scal+\frac16||T||^2.
\end{equation}
In terms of the torsion connection  the steady  generalized gradient Ricci soliton condition \eqref{gein1} can be written in the form (see \cite{IS})
\begin{equation}\label{gein4}
Ric=-\sb^2f, \qquad \d T=-df\lrcorner T, \qquad dT=0.
\end{equation}
\begin{thrm}\label{inf}
Let $(M,\ph)$ be a compact $Spin(7)$--manifold with closed torsion, $dT=0$. The next two conditions are equivalent.
 \begin{itemize}
\item[a)] $(M,\ph)$  is a steady generalized gradient Ricci soliton, i.e. there exists a smooth function $f$ determined with $u=\exp(-\frac12f)$, where $u$ is  the first eigenfunction of the Schr\"odinger operator \eqref{schro}, such that \eqref{gein1}, equivalently \eqref{gein4},  hold. 
\item[b)]  For $f$ determined  by the first eigenfunction $u$ of the  Schr\"odinger operator \eqref{schro} with  $u=\exp(-\frac12f)$, the vector field $$V=\frac76\theta-df \quad  is \quad \sb-parallel, \quad \sb V=0.$$
\end{itemize}
The $\sb$-parallel vector field $V$  determines $d\theta$ and  preserves the $Spin(7)$--structure $(g,\ph)$,
\begin{equation}\label{vsu3}
\frac76d\theta=V\lrcorner T, \quad \mathbb{L}_Vg=\mathbb{L}_V\ph=0.
\end{equation}
If $V$ vanishes at one point then $V=0$. In this case $T=0$, the compact $Spin(7)$--manifold is parallel, $\LC\ph=0$, and $f=const$.
\end{thrm}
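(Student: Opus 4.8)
The plan is to observe that for such a manifold \emph{both} conditions actually hold, and to read off the equivalence from a single identity. By Proposition~\ref{grsol} the space is a steady generalized Ricci soliton with $X=\tfrac76\theta$, and by the compactness result of Garcia-Fernandez--Streets quoted before the theorem it is automatically gradient with $k=0$; this gives a), with $f$ fixed by $u=\exp(-\tfrac12 f)$ the first eigenfunction of \eqref{schro} and \eqref{gein4} holding. To connect a) and b) I would compare the first equation of \eqref{gein4}, $Ric_{ij}=-\sb_i\sb_jf$, with $Ric=-\tfrac76\sb\theta$ from Corollary~\ref{cclosT} (available since $dT=0$): their difference is $\sb_i\big(\tfrac76\theta_j-\sb_jf\big)=\sb_iV_j$, so the first soliton equation is equivalent to $\sb V=0$. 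In the direction b)$\Rightarrow$a) the remaining equation $\delta T_{ij}=-df_sT_{sij}$ is then free: by \eqref{nnewt} $\delta T_{ij}=\tfrac76 d^{\sb}\theta_{ij}$, while $\sb V=0$ forces $\tfrac76 d^{\sb}\theta_{ij}=-df_sT_{sij}$ because the skew part of the $\sb$-Hessian of a function is $\sb_i\sb_jf-\sb_j\sb_if=-T_{sij}\,\partial_s f$, and $dT=0$ is the hypothesis.

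Granting $\sb V=0$, the identities in \eqref{vsu3} follow directly. Taking the skew part of $\sb_iV_j=0$ and using $V=\tfrac76\theta-df$ gives $\tfrac76 d^{\sb}\theta=-df\lrcorner T$, whence by \eqref{nth} $\tfrac76 d\theta=\tfrac76(d^{\sb}\theta+\theta\lrcorner T)=(\tfrac76\theta-df)\lrcorner T=V\lrcorner T$. That $V$ is Killing is the standard fact that a $\sb$-parallel field for a metric connection with skew torsion satisfies $\LC_XV=-\tfrac12 T(X,V,\cdot)$, whose symmetrisation vanishes since $T$ is a $3$-form; hence $\mathbb{L}_Vg=0$. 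For $\mathbb{L}_V\ph$ I would expand the Lie derivative through $\sb$: since $\sb\ph=0$ and $\sb V=0$ only the torsion terms survive, and they assemble into the value of the Karigiannis operator $D$ of Proposition~\ref{spin7} on $V\lrcorner T$. Because $dT=0$ forces $d\theta\in\Lambda^2_{21}\cong spin(7)$ by Theorem~\ref{closT}, and $V\lrcorner T=\tfrac76 d\theta$, the $2$-form $V\lrcorner T$ lies in $\ker D=\Lambda^2_{21}$, so $\mathbb{L}_V\ph=0$.

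For the last assertion, $\sb g=0$ together with $\sb V=0$ give $d\|V\|^2=0$, so $\|V\|$ is constant and $V$ vanishing at one point forces $V\equiv0$. Then $\tfrac76\theta=df$ is exact, $\tfrac76 d\theta=V\lrcorner T=0$, and the soliton field is a genuine gradient; a short computation with \eqref{schro} shows the soliton eigenvalue is then $0$. The hard part will be to deduce $Ric=0$ in this degenerate situation: this is the rigidity of a compact steady \emph{gradient} generalized Ricci soliton, which I would establish by the maximum principle applied to the scalar identity \eqref{fmaxf} from the proof of Theorem~\ref{closTt}, now exploiting that the drift field $\theta=\tfrac67\nabla f$ is a gradient (so the associated operator is self-adjoint for the measure $e^{-\frac67 f}\,dvol$) and that the eigenvalue vanishes. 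Once $Ric=0$, the relation $Ric=-\tfrac76\sb\theta$ gives $\sb\theta=0$, hence $\delta\theta=0$; an exact and coclosed $1$-form on a compact manifold is harmonic, so $\theta=0$. The structure is then balanced with $dT=0$, so $\LC\ph=0$ by Corollary~\ref{thconj2}, and \eqref{torcy} gives $T=-*d\ph+\tfrac76*(\theta\wedge\ph)=0$ and $f=\mathrm{const}$.
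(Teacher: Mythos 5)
Your handling of the equivalence a)$\Leftrightarrow$b), of \eqref{vsu3}, and of the step ``$V$ vanishes at a point $\Rightarrow V\equiv 0$'' is correct and essentially coincides with the paper's own proof: a) from Proposition~\ref{grsol} plus the Garcia-Fernandez--Streets gradient result; $\sb V=0$ by comparing $Ric_{ij}=-\sb_i\sb_jf$ with $Ric=-\tfrac76\sb\theta$; the equation $\delta T_{ij}=-df_sT_{sij}$ recovered from the skew part of the $\sb$-Hessian; $\mathbb{L}_Vg=0$ by symmetrising; and $\mathbb{L}_V\ph=0$ from $V\lrcorner T=\tfrac76 d\theta\in\Lambda^2_{21}=\ker D$ via Proposition~\ref{spin7} and Theorem~\ref{closT}. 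Up to there you match the paper step for step.

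The genuine gap is in the final assertion, after reducing to $\theta=\tfrac67 df$. You propose to deduce $Ric=0$ from a ``rigidity of compact steady gradient generalized Ricci solitons'', to be proved by a weighted maximum principle applied to \eqref{fmaxf}. Two problems. First, no such general rigidity exists: compact steady generalized Ricci solitons are automatically gradient, yet there are nontrivial compact examples that are not Bismut--Ricci-flat (e.g.\ Streets' pluriclosed steady solitons on Hopf surfaces), so the statement you want to invoke is false in that generality; any proof here must use the $Spin(7)$--specific relation \eqref{torcy} between $T$, $\theta$ and $\ph$. Second, the sketched argument does not close even in your degenerate setting: \eqref{fmax}/\eqref{fmaxf} has the shape $\Delta u+\theta_j\sb_j v=\tfrac{12}7||Ric||^2$ with two \emph{different} linear combinations $u,v$ of $\delta\theta$ and $||T||^2$, so no single function is being acted on by a drift Laplacian. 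Multiplying by $e^{-cf}$ (using $\theta=\tfrac67 df$) and integrating, the $\delta\theta$-terms cancel only for $c=1$, and what remains is $\tfrac27\int_M e^{-f}\langle df,d||T||^2\rangle\,vol=\tfrac{12}7\int_M e^{-f}||Ric||^2\,vol$, whose left-hand side has no sign, so no conclusion follows. The paper settles this case by an entirely different, short no-go computation: inserting $\theta=\tfrac67 df$ into \eqref{torcy} gives $T=-*e^{f}d(e^{-f}\ph)$, hence $\int_M e^{-f}||T||^2\,vol=\int_M T\wedge d(e^{-f}\ph)=-\int_M e^{-f}\,dT\wedge\ph=0$ by $dT=0$ and integration by parts; thus $T=0$ directly, then $\LC\ph=0$ and $f=\mathrm{const}$ follow at once. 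Your subsequent chain ($Ric=0\Rightarrow\sb\theta=0\Rightarrow\theta$ exact and coclosed $\Rightarrow\theta=0$, then Corollary~\ref{thconj2}) is fine as logic, but it hangs on the unproved $Ric=0$, so you should replace that portion with the integration-by-parts argument.
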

\begin{proof}
The first statement a) follows  from Proposition~\ref{grsol} and the general considerations in \cite[Chapter~6]{GFS},  \cite[Lemma~6.3, Corollary~6.10, Corollary~6.11]{GFS}.

To prove b) follows from a), observe that the  $dT=0$ implies \eqref{nnewt}  which 
%$$Ric=-\frac76\sb\theta, \quad -\d T=-\frac76d^{\sb}\theta=\frac76(-d\theta+\theta\lrcorner T)$$ by Theorem~\ref{closT} and \eqref{rics}. The latter  
combined with \eqref{gein4} yield \[\sb(\frac76\theta-df)=0,\qquad \frac76d\theta=V\lrcorner T.\]
For the converse, the condition b)  in the dorm $\frac76\sb\theta=\sb df$ combined with  Theorem~\ref{closT}  and \eqref{rics},  gives
\[Ric_{ij}=-\frac76\sb_i\theta_j=-\sb_i\sb_jf,\quad -\d T_{ij}=Ric_{ij}-Ric_{ji}=df_sT_{sij},\]
since $0=d^2f=\LC_i\LC_jf-\LC_j\LC_if=\sb_i\sb_jf-\sb_j\sb_if+df_sT_{sij}$. 
Hence, \eqref{gein4} holds, which proves the equivalence between a) and  b).

Further, we have $( \mathbb{L}_Vg)_{ij}=\LC_iV_j+\LC_jV_i=\sb_iV_j+\sb_jV_i=0$, since $\sb V=0$. Hence $V$ is Killing.

We calculate the Lie derivative of $\ph$ %from the definitions of the Lie derivative  and the torsion tensor 
(see e.g. \cite{KN}), 
using  $\sb V=0$. We have 
\begin{multline*}
( \mathbb{L}_V\ph)(X,Y,Z,U)=V\ph(X,Y,Z,U)-\ph([V,X],Y,Z,U)-\ph(X,[V,Y],Z,U)-\ph(X,Y,[V,Z],U)\\-\ph(X,Y,Z,[V,U])=V\ph(X,Y,Z,U)-\ph(\sb_VX,Y,Z,U)-\ph(X,\sb_VY,Z,U)-\ph(X,Y,\sb_VZ,U)\\+T(V,X,e_a)\ph(e_a,Y,Z,U)+T(V,Y,e_a)\ph(X,e_a,Z,U)\\+T(V,Z,e_a)\ph(X,Y,e_a,U)+T(V,U,e_a)\ph(X,Y,Z,e_a).
\end{multline*}
In view of  the already proved first equality in \eqref{vsu3}, we obtain from the last identity that
\begin{equation*}
(\mathbb{L}_V\ph)_{ijkl}=(\sb_V\ph)_{ijkl}+\frac76\Big(d\theta_{is}\ph_{sjkl}+d\theta_{js}\ph_{iskl}+d\theta_{ks}\ph_{ijsl}+d\theta_{ls}\ph_{ijks}  \Big)=0,
\end{equation*}
where we use $\sb \ph=0$, the fact that $d\theta\in spin(7)\cong\Lambda^2_{21}$, due to Corollary~\ref{cclosT}, and apply Proposition~\ref{spin7}  to achieve the last identity. This completes the proof of \eqref{vsu3}.

If $V=0$ then the Lee form is an exact form, $\theta=\frac67df$. This combined with $dT=0$ implies $T=df=0$ since M is compact, fact  well known in physics (see \cite{GMW,GMPW}). 

For completeness, we give  a  different proof relying on the equality  \eqref{g22}. 

Insert $\theta=\frac67df$ into \eqref{g22} and use $dT=0$ to get
\begin{equation}\label{exac}
||T||^2=6(\Delta f+||df||^2)=-6e^{f}\Delta u,
\end{equation}
where $\Delta f$ is the Laplace operator $\Delta f=-\LC_i\LC_if=-\sb_i\sb_if$ acting on smooth function $f$, since the torsion of $\sb$ is  a 3-form and the smooth function  $u=e^{-f}$.

Multiply  \eqref{exac} with $e^{-f}$  and integrate the obtained equality on the compact $M$ to achieve $T=0$. Therefore, $df=0$ and $\LC\ph=0$.

\end{proof}

\subsection{ Examples.}\label{bi} Compact $Spin(7)$--structures with closed torsion became a part of the generalized $Spin(7)$--structures  considered first by F. Witt in \cite{Wit}. A generalized  integrable $Spin(7)$--structure  consists of two   $Spin(7)$--structures $(\ph,\tilde{\ph})$ generating the same Riemannian metric with opposite closed torsions, $T=-\tilde T, dT=0,$ with  exact Lee forms defined by F. Witt in \cite[Proposition~5.5]{Wit}. In fact, these conditions solve automatically the first two Killing spinor equations in type II string theories in dimension eight (see e.g. \cite{GMW,GKMW,Wit}). Unfortunately, the no-go theorems mentioned above lead to the non-existence of compact generalized  integrable $Spin(7)$--structures with non-vanishing torsion \cite[Corollary~5.6]{Wit}.

 Basic examples of compact $Spin(7)$--manifolds with closed torsion are provided with the group manifolds which have flat torsion connection. However, the corresponding Lee form can be  closed but not exact. 
 
 More precisely, it is well known that any compact 8-dimensional Lie group equipped with a biinvariant metric  and a left-invariant $Spin(7)$--structure $\ph$, generating the biinvariant metric, together with the left-invariant flat Cartan connection with closed torsion 3-form $T=-[.,.],$ preserving the $Spin(7)$--structure $\ph$, is an
invariant $Spin(7)$--structure with a closed torsion 3-form, $dT=0$.
 
 We describe here some examples.
 \begin{exam}
 Consider the product of two copies of the primary Hopf surface, the compact Lie goup $G=S^1\times S^3\times S^3\times S^1=U(1)\times SU(2)\times SU(2)\times U(1)$ with the biinvariant metric and a left-invariant  $Spin(7)$--structure $\ph$, which is parallel with respect to the flat left-invariant  Cartan  connection with closed  torsion $T=-[.,.]$.

More precisely, on the group $G=U(1)\times SU(2)\times SU(2)\times U(1)$ with Lie algebra $g=\mathbb R\oplus su(2)\oplus su(2)\oplus\mathbb R$ and structure equations   
\begin{equation}\label{struc} de_0=0, \quad de_1=e_{23},\quad de_2=e_{31},\quad de_3=e_{12},\quad de_4=e_{56},\quad de_5=e_{64},\quad de_6=e_{45}, \quad de_7=0,
\end{equation}
one considers the  family of left-invariant $SU(3)$ structures $(F,\ps_t,\sp_t)$ on $su(2)\oplus su(2)$ defined by
\[F=e_{14}+e_{25}-e_{36},\quad \ps_t=\cos t\ps+\sin t\sp, \quad \sp_t=-\sin t\ps+\cos t\sp,\quad  where\]
\[\ps=e_{123}+e_{156}-e_{246}-e_{345},\quad \sp=e_{456}+e_{234}-e_{135}-e_{126}, \]
and the family of $G_2$ structures on $G=SU(2)\times SU(2)\times S^1$ defined by
\[\p_t=F \wedge e_7+\ps_t.\]
It is shown in \cite[Proposition~6.2]{FMR} that the left-invariant $G_2$ structures $\p_0, \p_{\frac{\pi}4}, \p_{\frac{3\pi}4}$ on $SU(2)\times SU(2)\times U(1)$ are integrable, $d*\p=\theta\wedge*\p$, induce the same biinvariant metric on the group   $SU(2)\times SU(2)\times U(1)$, have the same closed and co-closed torsion $T=e_{123}+e_{456}$, which is the product of the 3-forms $T=-g([.,.],.)$ of each factor $SU(2)$. The  connection is the left-invariant Cartan connection with torsion $T=-[.,.].$ Moreover, $\p_{\frac{3\pi}4}$ is strongly integrable,
$d\p_{\frac{3\pi}4}\wedge\p_{\frac{3\pi}4}=0,$ and has closed Lee form, 
$\theta_{\frac{3\pi}4}=de_7$. The structure $\p_0$ is of constant type  $d\p_0\wedge\p_0=7vol$ with closed Lee form $\theta_0=de_7$ and  $\p_{\frac{\pi}4}$ is balanced and of constant type, $\delta\p_{\frac{\pi}4}=0,$ and $d\p_{\frac{3\pi}4}\wedge\p_{\frac{3\pi}4}=\frac{11}{\sqrt{2}}vol$.

According to \cite[Theorem~5.1]{II}, the left-invariant $Spin(7)$--structures on  the group $G=SU(2)\times SU(2)\times U(1)\times U(1)$  defined by
\[\ph_t=e_0\wedge\p_t+*^{7}\p_t \quad \textnormal{for} \quad  t\in\{0,\frac{\pi}4,\frac{3\pi}4\},\]
where $*^{7}$ is the Hodge star operator on the seven dimensional factor   $SU(2)\times SU(2)\times U(1)$,  induce the same biinvariant metric on the group   $G$, have the same closed and co-closed torsion $T=e_{123}+e_{456}$, which is the product of the 3-forms $T=-g([.,.],.)$ of each factor $SU(2)$. The $Spin(7)$-torsion connection is the left-invariant Cartan connection with torsion $T=-[.,.]$ and closed Lee form $\theta_t=\frac67e_7+\frac17(d\p_t,*^{7}\p_t)e_0$.
\end{exam}

\begin{exam} Consider again the compact Lie group $G=U(1)\times SU(2)\times SU(2)\times U(1)$. 
\begin{itemize}
\item[a) ]If one takes the left-invariant $Spin(7)$--structure $\ph$ on  the group $G$ defined by  \eqref{s1} it is easy to get using \eqref{struc} that it is  with closed torsion 3-form $T=e_{123}+e_{456}$ and non-closed Lee form $\theta_{\ph}=\frac67(e_4-e_3)$ with $d\theta_{\ph}=\frac67(e_{56}-e_{12})\in spin(7)\cong\Lambda^2_{21},$ according to Theorem~\ref{closT}. 
\item[b)] Define on the group $G$ the three 2-forms
\[F_1=e_{01}+e_{23}+e_{45}+e_{67},\quad F_2=e_{02}-e_{13}+e_{46}-e_{57},\quad F_3=e_{03}+e_{12}+e_{47}+e_{56}\]
and the left-invariant $Spin(7)$--structure $\ph=\frac12(F_1\wedge F_1+F_2\wedge F_2-F_3\wedge F_3)$.  It is easy to verify applying \eqref{struc} that $\ph$ has    closed torsion 3-form $T=e_{123}+e_{456}$ and  closed Lee form $\theta_{\phi}=\frac67(e_7-e_0)$. 
\end{itemize}
\end{exam}

\begin{exam} (The simple Lie group $SU(3)$) Consider $SU(3)$ with a basis of left-invariant 1-forms $e_0,\dots,e_7$, satisfying the structure equations \cite[Chapter~4.1]{WP}
\begin{equation*}
\begin{split}
de_1=de_2=de_4=0,\quad de_3=-e_{12}, \quad de_5=-\frac12e_{34},\quad de_6=\frac12e_{15}-\frac12e_{24},\\
de_7=-\frac12e_{14}-\frac12e_{25}+\frac12e_{36}, \qquad de_0=-\frac{\sqrt{3}}2\Big(e_{45}+e_{67}\Big).
\end{split}
\end{equation*}
The left-invariant $Spin(7)$--structure $\ph$ defined by \eqref{s1} generates the biinvariant metric $g=\sum_{s=0}^7e_s^2$. The left-invariant flat Cartan connection with skew-symmetric torsion $T=-g([.,.],.)$ preserves the  $Spin(7)$--structure $\ph$ and has closed torsion, $dT=0$.  
\end{exam}

\section{Concluding remarks and further investigations}

We have investigated the curvature properties of the unique metric connection with skew-symmetric torsion $T$ preserving the $Spin(7)$--structure on a compact $Spin(7)$--manifold and  relations with the properties of the exterior differential and co-differential of the 3-form torsion. We have obtained precise conditions  on the torsion when the curvature of the torsion connection resembles some of the  curvature properties of the Levi-Civita connection of a $Spin(7)$--manifold with a Riemannian holonomy inside $Spin(7)$. We have expressed the closedness of the  3-form torsion in terms of the Ricci tensor of the torsion connection. 

We have shown that any compact $Spin(7)$--manifold with closed torsion 3-form is a generalized gradient Ricci soliton and this is equivalent to a certain vector field to be parallel with respect to the torsion connection. In particular, this vector field preserves 
the $Spin(7)$--structure. 

An open problem is whether it will be possible to find a type of a $Spin(7)$--Ricci like flow in terms of the given $Spin(7)$--structure on a compact manifold with closed torsion which preserves the closedness of the torsion.

We have obtained that if the curvature of the $Spin(7)$--torsion  connection satisfies the Riemannian first Bianchi identity then it is a fixed point of the  generalized Ricci flow provided the exterior derivative of the Lee form belongs to the Lie algebra $spin(7)$, i.e. the torsion is harmonic and the Ricci tensor of the $Spin(7)$--torsion  connection vanishes. 

In the irreducible case, such a Spin(7)--manifold has to be a simple compact Lie group $SU(3)$ with its biinvariant flat metric and left--invariant $Spin(7)$--structure. 

An  open problem is whether the irreducibility assumption can be dropped, namely,   show that a compact $Spin(7)$--space with the exterior derivative of the Lee form belonging to the Lie algebra $spin(7)$ and curvature of the torsion connection satisfying the Riemannian first Bianchi identity  has to be flat.

Moreover, to the best of our knowledge, there are not known  compact  $Spin(7)$--manifolds with closed torsion which are not  group manifolds and it is an open problem  whether such  spaces do exist.

\end{document}